\let\Re\undefine
\let\Im\undefine
\DeclareMathOperator{\Re}{Re}
\DeclareMathOperator{\Im}{Im}
\DeclareMathOperator{\fl}{\mathsf{fl}}
\DeclareMathOperator{\tr}{tr}
\DeclareMathOperator{\brank}{\overline{\rank}}
\DeclareMathOperator{\rank}{rank}
\newcommand{\op}{\mathbin{\mathsf{op}}}
\newcommand{\tp}{{\scriptscriptstyle\mathsf{T}}}
\newcommand{\F}{{\scriptscriptstyle\mathsf{F}}}
\newcommand{\St}{{\scriptscriptstyle\mathsf{S}}}
\newcommand{\W}{{\scriptscriptstyle\mathsf{W}}}
\newcommand{\C}{{\scriptscriptstyle\mathsf{C}}}
\newcommand{\G}{{\scriptscriptstyle\mathsf{G}}}
\newcommand{\R}{{\scriptscriptstyle\mathsf{R}}}
\newcommand{\N}{{\scriptscriptstyle\mathsf{N}}}
\theoremstyle{definition}
\newtheorem{theorem}{Theorem}[section]
\newtheorem{definition}[theorem]{Definition}
\newtheorem{corollary}[theorem]{Corollary}
\newtheorem{proposition}[theorem]{Proposition}
\numberwithin{equation}{section}
\begin{document}
\title{Numerical stability and tensor nuclear norm}
\author{Zhen Dai}
\author{Lek-Heng Lim}
\address{Computational and Applied Mathematics Initiative,
University of Chicago, Chicago, IL 60637-1514.}
\email{zhen9@uchicago.edu}
\email{lekheng@uchicago.edu}

\begin{abstract}
We present a notion of bilinear stability, which is to numerical stability what bilinear complexity is to time complexity. In bilinear complexity, an algorithm for evaluating a bilinear operator $\beta : \mathbb{U} \times \mathbb{V} \to \mathbb{W}$ is a decomposition $\beta =  \varphi_1 \otimes \psi_1 \otimes w_1 + \dots + \varphi_r \otimes \psi_r \otimes w_r $; the number of terms $r$ captures the speed of the algorithm; and its smallest possible value, i.e., the tensor rank of $\beta$, quantifies the speed of a fastest algorithm. Bilinear stability introduces norms to the mix: The growth factor of the algorithm $\lVert \varphi_1 \rVert_*  \lVert \psi_1 \rVert_* \lVert w_1 \rVert + \dots + \lVert \varphi_r \rVert_* \lVert \psi_r \rVert_* \lVert w_r \rVert$ captures the accuracy of the algorithm; and its smallest possible value, i.e., the tensor nuclear norm of $\beta$, quantifies the accuracy of a stablest algorithm. To substantiate this notion, we establish a bound for the forward error in terms of the growth factor and present numerical evidence comparing various fast algorithms for matrix and complex multiplications, showing that larger growth factors correlate with less accurate results. Compared to similar studies of numerical stability, bilinear stability is more general, applying to any bilinear operators and not just matrix or complex multiplications; is more simplistic, bounding forward error in terms of a single (growth) factor; and is truly tensorial like bilinear complexity, invariant under any orthogonal change of coordinates. 
As an aside, we study a new algorithm for computing complex multiplication in terms of real, much like Gauss's, but is optimally fast and stable in that it attains both tensor rank and nuclear norm.
\end{abstract}

\maketitle

\section{Introduction}

More than fifty years ago, in Volume~13 of this journal, Volker Strassen announced an astounding result: A pair of $2 \times 2$ matrices may be multiplied with seven multiplications \cite{strassen}. A consequence is that linear systems can be solved in $O(n^{\log_2 7})$ time complexity, a surprise at that time as existing works such as \cite{KK} purportedly showed that $O(n^3)$ was the lowest possible.

Strassen's algorithm is in the spirit of the well-known algorithm, often attributed to Gauss,\footnote{See \cite[p.~37]{Moore}, \cite[p.~8]{Rudich} for example.} for multiplying a pair of complex numbers with three real multiplications \cite{Higham},
\begin{equation}\label{eq:gauss}
(a + bi)(c+ di) =  (ac - bd) + i[(a+b)(c+d) -ac -bd],
\end{equation}
but is notable in that Strassen's applies to a noncommutative product (matrix multiplication) as opposed to a commutative one (complex scalar multiplication). It led to a plethora of followed-up works and ultimately to the realization that there is a unified framework underlying the algorithms of Gauss and Strassen, namely, in evaluating a bilinear operator $\beta \colon \mathbb{U} \times \mathbb{V} \to \mathbb{W}$, viewed as a $3$-tensor in $\mathbb{U}^* \otimes \mathbb{V}^* \otimes \mathbb{W}$, any decomposition
\begin{equation}\label{eq:decomp}
\beta = \varphi_1 \otimes \psi_1 \otimes w_1 + \dots + \varphi_r \otimes \psi_r \otimes w_r
\end{equation}
into linear functionals $\varphi_i \colon \mathbb{U} \to \mathbb{R}$, $\psi_i : \mathbb{V} \to \mathbb{R}$, and vectors $w_i \in \mathbb{W}$, $i=1,\dots,r$, gives us an algorithm for computing $\beta$. Furthermore, the number of terms $r$ in such a decomposition counts precisely the number of multiplications, and thus the minimal value of $r$, i.e., the \emph{tensor rank} of $\beta$, gives the optimal complexity for evaluating $\beta$  in an appropriate sense \cite{Crelle} (see Section~\ref{sec:speed}). Both Gauss's and Strassen's algorithms are the fastest possible according to this measure, that is, they attain the tensor ranks of complex multiplication (three) and $2 \times 2$ matrix product (seven) respectively  \cite{Winograd-sharp}. 

Well-known to readers of this journal, speed is not all that matters in an algorithm, numerical stability is arguably more important in finite-precision computations as rounding errors may result in an unstable algorithm producing no correct digits. While the stability of algorithms for evaluating bilinear operators has been studied for specific algorithms or operators in isolation, e.g., for Gauss's algorithm in \cite{Higham}, Strassen's algorithm in \cite{Brent}, and other fast matrix multiplication algorithms in \cite{Ballard,Bini}, there has been no unfied treatment that applies to all bilinear operators $\beta$ as in the case of speed. There is no analysis that quantifies stability in terms of some tensorial property of $\beta$ analogous to how speed is quantified in terms of its tensor rank. The goal of the present article is to fill this gap. We will show that just as the number of terms $r$ in the decomposition \eqref{eq:decomp} controls the speed of the algorithm, the \emph{growth factor}, defined as
\begin{equation}\label{eq:growth}
\lVert \varphi_1 \rVert_*  \lVert \psi_1 \rVert_* \lVert w_1 \rVert + \dots + \lVert \varphi_r \rVert_* \lVert \psi_r \rVert_* \lVert w_r \rVert,
\end{equation}
controls the stability of the algorithm; and just as the tensor rank of $\beta$ measures the optimal speed, the \emph{tensor nuclear norm} of $\beta$, defined as
\begin{equation}\label{eq:nuclear}
\lVert \beta \rVert_{\nu} \coloneqq \inf\biggl\{  \sum_{i=1}^r \lVert\varphi_i \rVert_* \lVert\psi_i \rVert_* \lVert w_i\rVert : \beta = \sum_{i=1}^{r} \varphi_i \otimes \psi_i\otimes w_i\biggr\},
\end{equation}
measures the optimal stability, the precise meaning of which we will state in due course.

Although we have alluded to the relation between tensor nuclear norm and numerical stability in earlier works \cite{acta,struct,nuclear}, we have never stated a precise relation nor carried out numerical experiments to demonstrate the relation. This article provides both. Theorem~\ref{thm:main} gives a general relation between the growth factor of a bilinear algorithm and its forward error, from which a relation between tensor nuclear norm and forward error may be deduced as in Corollary~\ref{cor:main}. We then perform a range of numerical experiments involving Gauss's and Strassen's algorithms to substantiate our theoretical findings:
\begin{description}
\item[Matrix multiplication] We compare Strassen's algorithm with a well-known variant due to Winograd \cite{HighamBook,winograd}. While both attain the optimal seven multiplications, Winograd's variant is often favored because it requires only fifteen additions, compared to Strassen's eighteen. Nevertheless we will show that Strassen's algorithm has a growth factor of $12 + 2\sqrt{2} \approx 14.83$ whereas Winograd's variant has a growth factor of $7 + 4\sqrt{2} +3 \sqrt{3} \approx 17.85$. For comparison, the conventional algorithm for $2 \times 2$ matrix product has eight multiplications and a growth factor of $8$. Our numerical experiments confirm that  in terms of accuracy Winograd's is indeed worse than Strassen's, which is in turn worse than the conventional algorithm, as Theorem~\ref{thm:main} indicates.

\item[Complex multiplication] We compare the regular algorithm for complex multiplication, which requires four real multiplications and has a growth factor of $4$; Gauss's algorithm,  which requires three real multiplications but has a larger growth factor of  $2(1+\sqrt{2}) \approx 4.83$; and a new algorithm:
\begin{equation}\label{eq:ournew}
\begin{aligned}
(a + bi)(c+ di) &= \frac{1}{2} \biggl[
\biggl(a + \frac{1}{\sqrt{3}}b\biggr)
\biggl(c + \frac{1}{\sqrt{3}}d\biggr)
+\biggl(a - \frac{1}{\sqrt{3}}b\biggr)
\biggl(c -  \frac{1}{\sqrt{3}}d\biggr)
-\frac{8}{3}bd
\biggr]\\
&\qquad+
\frac{i\sqrt{3}}{2} \biggl[
\biggl(a + \frac{1}{\sqrt{3}}b\biggr)
\biggl(c + \frac{1}{\sqrt{3}}d\biggr)
-\biggl(a - \frac{1}{\sqrt{3}}b\biggr)
\biggl(c - \frac{1}{\sqrt{3}}d\biggr)
\biggr].
\end{aligned}
\end{equation}
This new algorithm has the best features of both the regular and Gauss's algorithms, requiring three real multiplications and yet has the smaller (in fact, smallest, as we will see) growth factor of $4$. Again the results are consistent with the prediction of Theorem~\ref{thm:main}.
\end{description}

For the uninitiated, we would like to stress that the aforementioned algorithms only begin to make a difference when they are applied recursively, or applied to matrices, or both. For instance, Gauss's algorithm \eqref{eq:gauss} is really quite useless for multiplying a pair of complex numbers, whether `by hand' or on a computer. It only becomes useful when applied recursively in the form of Karatsuba's algorithm \cite{Kara} for integer multiplication, with $i$ replaced by the number base; or when applied to complex matrices \cite{Fam}:
\begin{equation}\label{eq:G}
(A + iB)(C+ iD)= (AC - BD) + i[(A+B)(C+D) -AC -BD],
\end{equation}
with $A + i B, C + i D \in \mathbb{C}^{n \times n}$, $A, B, C, D \in \mathbb{R}^{n \times n}$. As multiplication of matrices is much more expensive than addition of matrices, so \eqref{eq:G} really does represent an enormous savings in speed over the regular algorithm:
\begin{equation}\label{eq:U}
(A + iB)(C+ iD)= (AC - BD) + i(BC +AD).
\end{equation}
Likewise, our new algorithm \eqref{eq:ournew} only begins to make a difference when applied to matrices. For the same reason,  the algorithms of Strassen and Winograd are only worth the trouble when applied recursively to a product of $n \times n$ matrices partitioned recursively into $2 \times 2$ blocks.

To address another related point early on, a surprisingly common complaint among early feedbacks is that there are a lot of $\sqrt{3}$'s in our algorithm \eqref{eq:ournew}. Certainly, if one computes these products `by hand,' it would be easier to use the regular or Gauss's algorithm since they do not involve irrational coefficients. But when performed by a computer this is completely immaterial. In case it is not clear, it does not matter whether we multiply by $3$ or by $\sqrt{3}$; to a computer (or any IEEE 754-compliant equipment) both are binary strings of $0$'s and $1$'s and arithmetic takes one flop regardless. Maybe there would be some minor savings when a constant happens to be a power of $2$ --- because of binary arithmetic --- but aside from that, it makes no difference what coefficients appear in our algorithm.

For the matrix multiplication experiments, our goal is to illustrate Theorem~\ref{thm:main} by comparing the known algorithms of Strassen and Winograd. Incidentally, a numerical comparison of the accuracy of Strassen's algorithm and Winograd's variant was stated as a research problem in \cite[Exercise~23.10]{Higham}. Our work in Section~\ref{sec:fmm} supplies both numerical evidence and a rigorous explanation of why  Strassen’s is more accurate than Winograd’s.

For the complex multiplication experiments, aside from providing another illustration of Theorem~\ref{thm:main}, we also have the additional goal of testing, for the first time, the new algorithm \eqref{eq:ournew} applied to multiply complex matrices, which we will see is
\begin{itemize}
\item nearly as fast as Gauss's algorithm \eqref{eq:G}, and
\item nearly as stable as the regular algorithm \eqref{eq:U}.
\end{itemize}
To substantiate these claims, we perform more extensive experiments to compare \eqref{eq:ournew}, \eqref{eq:G}, and \eqref{eq:U}, including three practical applications: evaluation of matrix polynomials via Horner's method \cite{matrix_function}, unitary transform, and complex-valued neural networks \cite{complexNN_1,complexNN_6,complexNN_3,complexNN_4,complexNN_5,complexNN_2}. All our codes are available from \url{https://github.com/zhen06/Complex-Matrix-Multiplication}.

\subsection*{Conventions}

To reduce notational clutter, we denote norms on different vector spaces $\mathbb{U}, \mathbb{V}, \mathbb{W}$ by the same $\lVert \, \cdot \, \rVert$. There is no cause for confusion since we always use it in a form like $\lVert v \rVert$ for some $v \in \mathbb{V}$, where it is clear from context that   $\lVert \, \cdot \, \rVert$ refers to a norm on $\mathbb{V}$. Likewise the corresponding dual norms on $\mathbb{U}^*, \mathbb{V}^*, \mathbb{W}^*$ will be denoted by the same $\lVert \, \cdot \, \rVert_*$. Recall that for $\varphi \in \mathbb{V}^*$, i.e., $\varphi : \mathbb{V} \to \mathbb{R}$ is a linear functional, this is defined by
\[
\lVert \varphi \rVert_* \coloneqq \sup \{\lvert \varphi(v) \rvert : \lVert v \rVert \le 1\}.
\]
In this article, ``stability'' and ``accuracy'' have the same meaning, i.e.,  small forward error, but the former is used to describe an algorithm whereas the latter is used to describe its output.

\section{Bilinear complexity}\label{sec:speed}

We provide a brief review of bilinear complexity, usually studied in Algebraic Computational Complexity \cite{Borodin,BCS,Landsberg,Handbook}, for numerical analysts. Our goals here are to (i) highlight certain departures from typical practice in numerical linear algebra; and (ii) show a parallel with our notion of bilinear stability in the next section.

Let $\mathbb{U}$, $\mathbb{V}$, $\mathbb{W}$ be finite-dimensional vector spaces, assume to be over $\mathbb{R}$ for simplicity. Let $\beta : \mathbb{U} \times \mathbb{V} \to \mathbb{W}$ be a bilinear operator. Depending on one's definition of a tensor, we have $\beta \in \mathbb{U}^* \otimes \mathbb{V}^* \otimes \mathbb{W}$ either through definition \cite[Definition~3.3]{acta} or by the universal mapping property \cite[Equation~4.88]{acta}. A \emph{bilinear algorithm} for evaluating $\beta$ is a decomposition of the form \eqref{eq:decomp}. In other words, for any $u \in \mathbb{U}$ and $v \in \mathbb{V}$, we evaluate $\beta(u,v)$ by performing the algorithm given by the decomposition on the right:
\begin{equation}\label{eq:decompbilin2a}
\beta(u,v) = \sum_{i=1}^{r} \varphi_i (u) \psi_i (v) w_i.
\end{equation}
In practice, the vector spaces involved are usually Euclidean spaces of vectors $\mathbb{R}^n$ or  matrices $\mathbb{R}^{m \times n}$. Riesz representation theorem guarantees that any linear functional $\varphi : \mathbb{R}^n \to \mathbb{R}$ must take the form $\varphi(x) = a^\tp x$ for some $a \in \mathbb{R}^n$ and likewise any functional $\varphi : \mathbb{R}^{m \times n} \to \mathbb{R}$ must take the form $\varphi(X) = \tr(A^\tp X)$ for some $A \in \mathbb{R}^{m \times n}$.

Each rank-one term $\varphi_i (u) \psi_i (v) w_i$ in \eqref{eq:decompbilin2a} accounts for one multiplication but herein lies a pitfall --- the `multiplication' refers to the product  of $\varphi_i(u)$ and $\psi_i(v)$; note that this a variable product, i.e., the value depends on variables $u$ and $v$, as opposed to a scalar product. Take a randomly made-up example\footnote{Genuine examples to follow in Sections~\ref{sec:fmm} and \ref{sec:cplx}.} with $\mathbb{U} = \mathbb{R}^{2 \times 2}$, $\mathbb{V} =\mathbb{R}^2$,  $\mathbb{W} = \mathbb{R}^3$, and
\[
\varphi_i\bigl(\begin{bsmallmatrix} a & b \\ c & d \end{bsmallmatrix} \bigr) = \tr\bigl(\begin{bsmallmatrix} -1 & 0 \\ 1 & 2 \end{bsmallmatrix}^\tp \begin{bsmallmatrix} a & b \\ c & d \end{bsmallmatrix} \bigr)  = -a + c + 2d, \quad \psi_i\bigl(\begin{bsmallmatrix} x \\ y \end{bsmallmatrix}\bigr) =\begin{bsmallmatrix} 3 \\ -1/2 \end{bsmallmatrix}^\tp \begin{bsmallmatrix} x \\ y \end{bsmallmatrix}  = 3x - y/2, \quad w_i  =\begin{bsmallmatrix} -3 \\ 4 \\ \sqrt{5} \end{bsmallmatrix},
\]
then there is exactly one multiplication in
\[
\varphi_i (u) \psi_i (v) w_i =  \begin{bsmallmatrix} -3( -a + c + 2d)(3x - y/2) \\  4( -a + c + 2d)(3x -y/2)   \\ \sqrt{5}( -a + c + 2d)(3x -y/2) \end{bsmallmatrix}.
\]
The scalar products like $2d$ or $-y/2$ or $\sqrt{5}t $ are discounted in Strassen's model of bilinear complexity \cite{Crelle,Crelle2} and for good reasons --- these constants coefficients are fixed in the algorithm and can be hardcoded or hardwired, unlike the product between $ -a + c + 2d$ and $3x -y/2$, which depends on the variable inputs $u =\begin{bsmallmatrix} a & b \\ c & d \end{bsmallmatrix} $ and $v = \begin{bsmallmatrix} x \\ y \end{bsmallmatrix}$. In particular, Strassen's measure of speed, called \emph{bilinear complexity}, is independent of the values of these constant coefficients, but we will show in the next section that these will affect numerical stability of the algorithm.

To emphasize its distinction from scalar products, Strassen calls a variable product in the above sense a \emph{nonscalar product} \cite{Crelle2}. In other words, bilinear complexity measures speed purely in terms of the number of nonscalar products. The bilinear complexity of the algorithm in \eqref{eq:decompbilin2a}  is given by the number terms in the decomposition $r$ and the optimal speed of evaluating $\beta$ is therefore given by the tensor rank \cite{Crelle}
\begin{equation}\label{eq:trankbilin}
\rank(\beta) \coloneqq \min\biggl\{r : \beta = \sum_{i=1}^{r} \varphi_i \otimes \psi_i \otimes w_i\biggr\}.
\end{equation}
A \emph{tensor rank decomposition} of $\beta$, i.e., one that attains its tensor rank, is then a fastest algorithm in the context of bilinear complexity.

In realistic scenarios, storage and computations both have finite-precision. Given $u$ and $v$, we do not need to know $\beta(u,v)$ exactly; in fact computing anything beyond $16$ decimal digits of accuracy is wasted effort since we do not store more than $16$ digits in IEEE double precision. So the tensor rank of $\beta$ is less relevant than the \emph{border rank} \cite{border} of $\beta$, which is the smallest $r$ so that
\[
\lVert \beta -  \varphi_1^\varepsilon \otimes \psi_1^\varepsilon \otimes w_1^\varepsilon - \varphi_2^\varepsilon \otimes \psi_2^\varepsilon \otimes w_2^\varepsilon - \dots - \varphi_r^\varepsilon \otimes \psi_r^\varepsilon \otimes w_r^\varepsilon \rVert < \varepsilon
\]
for all $\varepsilon > 0$, or, formally,
\begin{equation}\label{eq:brankbilin}
\brank(\beta) \coloneqq \min\biggl\{r : \beta = \lim_{\varepsilon \to 0^+ } \sum_{i=1}^{r} \varphi_i^\varepsilon \otimes \psi_i^\varepsilon \otimes w_i^\varepsilon\biggr\}.
\end{equation}

For the two problems studied in our article, namely, matrix multiplication,
\[
\beta_{m,n,p} : \mathbb{R}^{m \times n} \times  \mathbb{R}^{n \times p} \to  \mathbb{R}^{m \times p}, \quad (A,B) \mapsto AB,
\]
and complex multiplication,
\[
\beta_\mathbb{C} :\mathbb{C} \times \mathbb{C} \to \mathbb{C}, \quad (w,z) \mapsto wz,
\]
(noting that $\mathbb{C}$ is a two-dimensional real vector space), we have \cite{Landsberg2x2,Winograd-sharp}
\[
\rank(\beta_{2,2,2}) = \brank(\beta_{2,2,2}) = 7, \qquad \rank(\beta_\mathbb{C}) = \brank(\beta_\mathbb{C}) = 3.
\]
It is in general difficult to find such exact values. For instance, the values of $\rank(\beta_{3,3,3})$ and $\brank(\beta_{3,3,3})$ are still unknown. Most of the efforts in studying matrix multiplication go towards determining the asymptotic value $\omega \coloneqq \inf\{ p \in\mathbb{R} \colon  \rank(\beta_{n,n,n})=O(n^p)\}$, called the \emph{exponent of matrix multiplication}. An advantage is that asymptotically, the full arithmetic complexity, i.e., counting all operations and not just nonscalar multiplications, is also $O(n^\omega)$. More importantly, the role of $\omega$ stretches far beyond matrix multiplication, governing the full arithmetic complexity of computing inverse, determinant, null basis, linear systems, LU/QR/eigenvalue/Hessenberg decompositions, characteristic polynomials, sparsification, and even linear programming --- note in particular that none of these are bilinear operations \cite{Crelle2} (see also \cite[Chapter~16]{BCS} and \cite[Examples~3.10 and 4.40]{acta}.

\section{Bilinear stability}\label{sec:growth}

We would like to state at the outset that numerical stability is a moderately complicated issue that depends on many factors and cannot be completely represented by any single number. Designing numerically stable algorithms is as much an art as it is a science. However the six \emph{Higham guidelines} for numerical stability \cite[Section~1.18]{HighamBook} capture the most salient aspects. Among them, the second guideline to ``minimize the size of intermediate quantities relative to the final solution'' is one of the most unequivocal, lends itself to precise quantification, and is what we will focus on in this section. Consideration of Higham's second guideline for bilinear algorithms leads us naturally to the notion of \emph{bilinear stability}, which relates to accuracy the way bilinear complexity relates to speed. More precisely, the growth factor \eqref{eq:growth} and tensor nuclear norm \eqref{eq:nuclear} are to accuracy in bilinear stability what the number of rank-$1$ terms in \eqref{eq:decompbilin2a} and the tensor rank \eqref{eq:trankbilin} are to speed in bilinear complexity. Here accuracy refers to the size of relative forward error.

Bilinear stability differs from existing studies of numerical stability of bilinear algorithms such as those in \cite{Ballard,Bini,Brent,Higham} in three ways: (i) it is more general, applying to any bilinear operators as opposed to specific ones like matrix multiplication; (ii) it is more simplistic, relating forward error to just growth factor as opposed to two or three different factors in the approaches of \cite{Ballard,Bini}; (iii) it is truly tensorial, as growth factor and tensor nuclear norm are invariant under any orthogonal change-of-coordinates, just as tensor rank is invariant under any invertible change-of-coordinates. The factors (i) and (ii), i.e., generality and simplicity, may often be sacrificed for better bounds: Given any specific bilinear operator, we may often obtain smaller forward error bounds by performing a more precise analysis tailored to that given operator. We will do see this in Section~\ref{sec:roundcplx}.

One difference between bilinear complexity and bilinear stability is that the latter requires a norm. While there are many excellent treatises on tensor norms \cite{Defant, Diestel, Ryan}, they are excessive for our purpose. All the reader needs to know is that for a vector space $\mathbb{V}_i$ with norm $\lVert \, \cdot \, \rVert_i$, $i =1,\dots,d$, a tensor norm $\lVert \, \cdot \, \rVert$ on $\mathbb{V}_1 \otimes \mathbb{V}_2 \otimes \dots \otimes \mathbb{V}_d$ satisfies the multiplicativity property for rank-$1$ tensors:
\[
\lVert v_1 \otimes v_2 \otimes \dots \otimes v_d \rVert = \lVert v_1 \rVert_1 \lVert v_2 \rVert_2 \cdots \lVert v_d\rVert_d,
\]
where $v_i \in \mathbb{V}_i$. In particular, the spectral, Frobenius (also called Hilbert--Schmidt), nuclear norms \cite[p.~561 and Example~4.17]{acta} are all equal on rank-$1$ tensors in $\mathbb{U}^* \otimes \mathbb{V}^* \otimes \mathbb{W}$, i.e., 
\[
\lVert \varphi \otimes \psi \otimes w \rVert_\sigma = \lVert \varphi \otimes \psi \otimes w \rVert_\F = \lVert \varphi \otimes \psi \otimes w \rVert_\nu = \lVert \varphi \rVert_* \lVert \psi \rVert_* \lVert w\rVert
\]
for all $\varphi \in \mathbb{U}^*$, $\psi_i \in \mathbb{V}^*$, $w \in \mathbb{W}$. Consequently, when we speak of the norm of a rank-$1$ tensor  $ \varphi \otimes \psi \otimes w$, it does not matter which of these three norms we choose, and we will simply write
\[
\lVert \varphi \otimes \psi \otimes w \rVert \coloneqq \lVert \varphi \rVert_* \lVert \psi \rVert_* \lVert w\rVert.
\]

We first present a straightforward heurstic that motivates our definition of the growth factor, deferring the more formal forward error analysis to Theorem~\ref{thm:main}. If we apply the rank-one bilinear operator $\varphi_i \otimes \psi_i \otimes w_i$ to $u$ and $v$,
\begin{align*}
\lVert (\varphi_i \otimes \psi_i \otimes w_i) (u,v)\rVert = \lVert \varphi_i (u) \psi_i (v) w_i \rVert &= \lvert \varphi_i (u) \rvert \lvert \psi_i (v) \rvert \lVert w_i \rVert \\
&\le  \lVert \varphi_i \rVert_* \lVert u \rVert \lVert \psi_i \rVert_*  \lVert v\rVert \lVert w_i\rVert = \lVert \varphi_i \otimes \psi_i \otimes w_i \rVert  \lVert u\rVert \lVert v\rVert.
\end{align*}
So $\varphi_i \otimes \psi_i \otimes w_i$ magnifies the errors in $u$ and $v$ by an amount bounded by its tensor norm $\lVert \varphi_i \otimes \psi_i \otimes w_i\rVert$. 
Therefore, in a bilinear algorithm given by the right side of \eqref{eq:decompbilin2a} for evaluating $\beta$, triangle inequality gives
\[
\lVert \beta(u,v) \rVert = \biggl\lVert \sum_{i=1}^{r}  (\varphi_i \otimes \psi_i \otimes w_i) (u,v) \biggr\rVert \le   \biggl[ \sum_{i=1}^{r}   \lVert \varphi_i \otimes \psi_i \otimes w_i \rVert \biggr]  \lVert u\rVert \lVert v\rVert.
\]
The  $i$th step of the algorithm magnifies the error in the inputs $(u,v)$ by an amount bounded by  $\lVert \varphi_i \otimes \psi_i \otimes w_i\rVert$ and over the course of $r$ steps in the algorithm, the accumulated error is bounded by a factor of
\begin{equation} \label{eq:growthfactor}
 \sum_{i=1}^{r}   \lVert \varphi_i \otimes \psi_i \otimes w_i \rVert = \sum_{i=1}^{r} \lVert \varphi_i \rVert_*\lVert \psi_i \rVert_*\lVert w_i \rVert,
\end{equation}
which we will define as the \emph{growth factor} of the algorithm or decomposition \eqref{eq:decompbilin2a}. Its minimum value over all possible bilinear algorithms for evaluating $\beta$ or, equivalently, over all decomposition of $\beta$ as a $3$-tensor is therefore given by the nuclear norm \eqref{eq:nuclear}. This idea was first floated in \cite[Section~3.2]{struct}. Note that the growth factor depends on the algorithm/decomposition for $\beta$ but the nulcear norm depends only on $\beta$.

We now state a formal definition to make precise the terms used in the preceding discussions.
\begin{definition}
Let $\mathbb{U}, \mathbb{V}, \mathbb{W}$ be three finite-dimensional real vector spaces. A  \emph{decomposition} of a bilinear operator $\beta: \mathbb{U} \times \mathbb{V} \to \mathbb{W}$  is a list $\mathcal{D} = (\varphi_i,\psi_i,w_i)_{i = 1}^r $ with
\begin{equation}\label{eq:betadecomp}
\beta = \sum_{i=1}^{r} \varphi_i \otimes \psi_i \otimes w_i,
\end{equation}
where $\varphi_i : \mathbb{U} \to \mathbb{R}$ and $\psi_i : \mathbb{V} \to \mathbb{R}$ are linear functionals  and $w_i \in \mathbb{W}$, $i = 1,\dots,r$. An \emph{algorithm} $\widehat{\beta}_{\mathcal{D}}$ given by the decomposition $\mathcal{D}$ takes $(u,v) \in \mathbb{U} \times \mathbb{V}$ as inputs and computes the output $\beta(u,v)$ in three steps:
\begin{enumerate}[\upshape (i)]
    \item computes $\varphi_i(u)$ and $\psi_i(v)$, $i = 1,\dots,r$;
    \item\label{it:varmult} computes $\varphi_i(u) \psi_i(v) w_i$, $i = 1,\dots,r$;
    \item computes $\sum_{i = 1}^r \varphi_i(u) \psi_i(v) w_i$.
\end{enumerate}
The \emph{growth factor} of the algorithm $\widehat{\beta}_{\mathcal{D}}$ is defined as
\[
    \gamma(\widehat{\beta}_{\mathcal{D}}) \coloneqq \sum_{i=1}^{r} \lVert \varphi_i \otimes \psi_i \otimes w_i \rVert = \sum_{i=1}^{r} \lVert \varphi_i \rVert_*\lVert \psi_i \rVert_*\lVert w_i \rVert.
\]
\end{definition}
As noted in Section~\ref{sec:speed}, only the variable multiplication in step~\eqref{it:varmult} counts  in bilinear complexity; the other two steps comprising scalar multiplications and additions are discounted. In bilinear stability all three steps contribute to the growth factor.
\begin{proposition}\label{prop:nuclear}
The minimal growth factor is given by nucler norm of the $\beta$, i.e.,
\[
\min_\mathcal{D}     \gamma(\widehat{\beta}_{\mathcal{D}}) = \lVert  \beta \rVert_{\nu},
\]
with $\mathcal{D}$ running over all decomposition. Furthermore,  there is always an algorithm that attains the minimal growth factor.
\end{proposition}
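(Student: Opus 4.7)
The equation $\min_{\mathcal{D}} \gamma(\widehat{\beta}_{\mathcal{D}}) = \lVert\beta\rVert_\nu$ reduces, by direct comparison of \eqref{eq:nuclear} with the definition of the growth factor, to showing that the infimum $\inf_{\mathcal{D}} \gamma(\widehat{\beta}_{\mathcal{D}}) = \lVert\beta\rVert_\nu$ is in fact attained by some decomposition. My plan is to prove attainment by a Carathéodory-plus-compactness argument that crucially exploits the finite-dimensionality of the ambient tensor space $\mathbb{U}^* \otimes \mathbb{V}^* \otimes \mathbb{W}$.

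First I would reformulate the nuclear norm as a Minkowski functional. Absorbing the scalar $\lVert\varphi_i\rVert_* \lVert\psi_i\rVert_* \lVert w_i\rVert$ into a single factor of each rank-one term, one obtains
\[
\lVert\beta\rVert_\nu = \inf\biggl\{\sum_{i=1}^r c_i : r \in \mathbb{N},\ c_i \ge 0,\ s_i \in S,\ \beta = \sum_{i=1}^r c_i s_i\biggr\} = \inf\{t > 0 : \beta \in tK\},
\]
where $S \coloneqq \{\varphi \otimes \psi \otimes w : \lVert\varphi\rVert_* = \lVert\psi\rVert_* = \lVert w\rVert = 1\}$ and $K \coloneqq \operatorname{conv}(S)$. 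Note that $S$ is centrally symmetric, since $(-\varphi) \otimes \psi \otimes w = -(\varphi \otimes \psi \otimes w) \in S$, so $K$ is a symmetric convex set.

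The key step is compactness of $K$. The set $S$ is the continuous image of the product of three unit spheres under $(\varphi, \psi, w) \mapsto \varphi \otimes \psi \otimes w$, hence compact. Setting $d \coloneqq \dim\mathbb{U}\cdot\dim\mathbb{V}\cdot\dim\mathbb{W}$, Carathéodory's theorem lets us express $K$ as the continuous image of the compact set $\Delta_d \times S^{d+1}$ under the sum map $(\lambda, s_1, \dots, s_{d+1}) \mapsto \sum_{i=1}^{d+1} \lambda_i s_i$, where $\Delta_d$ is the standard $d$-simplex; hence $K$ is compact and, in particular, closed. It follows that $\inf\{t > 0 : \beta \in tK\}$ is attained at some $t^\ast = \lVert\beta\rVert_\nu$ with $\beta \in t^\ast K$ (the case $\beta = 0$ is handled by the empty decomposition). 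Writing $\beta = t^\ast \sum_{i=1}^{d+1} \lambda_i (\varphi_i \otimes \psi_i \otimes w_i)$ with unit-norm factors and $\sum_i \lambda_i = 1$, and then absorbing $t^\ast \lambda_i$ into $\varphi_i$, yields a decomposition $\mathcal{D}$ with $\gamma(\widehat{\beta}_{\mathcal{D}}) = \sum_i t^\ast \lambda_i = t^\ast = \lVert\beta\rVert_\nu$. The main obstacle is establishing closedness of $K$, which is precisely where finite-dimensionality is essential: in infinite dimensions one would have to pass to the closed convex hull and the infimum need not be attained. A pleasant by-product is the bound $r \le d+1$ on the length of an optimal decomposition.
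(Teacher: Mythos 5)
Your proof is correct, and it does something the paper itself declines to do: the paper's ``proof'' of this proposition is a two-sentence remark that the equality is just a restatement of \eqref{eq:nuclear} and that attainment follows from the existence of a nuclear decomposition, for which it simply cites Friedland--Lim \cite[Proposition~3.1]{nuclear}. Your Carath\'eodory-plus-compactness argument is a self-contained rendering of precisely the fact that citation is doing work for; so while the mathematical mechanism is presumably the same as in the cited source, you have genuinely filled in a step the paper delegates. A few remarks on the details: your reduction of $\lVert\beta\rVert_\nu$ to the Minkowski gauge $\inf\{t>0 : \beta \in tK\}$ of $K = \operatorname{conv}(S)$ is exactly right, and the central-symmetry observation is what makes this gauge a genuine norm, though for attainment itself you only use closedness of $K$; the compactness of $K$ via $K = \Sigma(\Delta_d \times S^{d+1})$ is the standard finite-dimensional argument and is watertight (pad with zero weights when fewer than $d+1$ points are needed, so the sum map is surjective); and your aside that the infimum could fail to be attained in infinite dimensions is the right cautionary note and precisely why the finite-dimensionality hypothesis in the proposition is not decorative. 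One minor sharpening you might note: when $\dim\mathbb{U},\dim\mathbb{V},\dim\mathbb{W}\ge 2$ the set $S$ is connected (a continuous image of a product of connected spheres), so Fenchel's connected refinement of Carath\'eodory gives $r\le d$ rather than $r\le d+1$.
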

The above equality is just stating \eqref{eq:nuclear}  in terms of the growth factor. That there is always an algorithm attaining the minimal growth factor, justifying our writing $\min$ instead of $\inf$, follows from the existence of a \emph{nuclear decomposition} \cite[Proposition~3.1]{nuclear}, i.e., a decomposition that attains the nuclear norm. Just as a rank decomposition of $\beta$ represents a fastest algorithm in bilinear complexity, a nuclear decomposition of $\beta$ represents a stablest algorithm in bilinear stability.

We next establish a rigorous relationship between growth factor and numerical stability by proving a forward error bound in terms of the growth factor of a bilinear algorithm. We assume a system of floating point arithmetic obeying the standard model as in \cite{HighamBook}: For $x,y \in \mathbb{R}$
\begin{equation}\label{eq:float}
    \fl(x \op y) = (x \op y)(1+\delta), \quad \quad |\delta| \le \boldsymbol{\mathsf{u}}, \quad \op = +,-,*,/
\end{equation}
with $\boldsymbol{\mathsf{u}}$ the unit roundoff, except when $\fl(x \op y) = 0$, in which case $\delta$ becomes $-1$.
We assume that $\mathbb{U}, \mathbb{V}, \mathbb{W}$ are vector spaces of dimensions $m,n,p$ and that appropriate computational bases have been chosen on them so that we may identify $\mathbb{U} \cong \mathbb{R}^m$, $\mathbb{V} \cong \mathbb{R}^n$, $\mathbb{W} \cong \mathbb{R}^p$. The computational bases do not need to be the standard bases and may instead be Fourier, Krylov, Haar, wavelet bases, etc. This is another reason why we cast our discussions in terms of abstract vector spaces and do not choose bases until absolutely necessary. However, once a choice of bases has been made, the result below depends only on the dimensions of $\mathbb{U}, \mathbb{V}, \mathbb{W}$; if say, $\mathbb{U} = \mathbb{R}^{m \times n}$, then only the fact that it has dimension $mn$ matters, i.e., $\mathbb{U} \cong \mathbb{R}^{mn}$.

\begin{theorem}[Growth factor and forward error]\label{thm:main}
Let $\beta: \mathbb{R}^m \times \mathbb{R}^n \to \mathbb{R}^p$ be a bilinear operator, $\mathcal{D} = (\varphi_i,\psi_i,w_i)_{i = 1}^r$ a decomposition, and $\widehat{\beta}_{\mathcal{D}}$ the corresponding algorithm. If $\widehat{\beta}_{\mathcal{D}}(u,v)$ is the output of $\widehat{\beta}_{\mathcal{D}}$ computed using floating point operations, with  $u \in \mathbb{R}^m$ and $v \in \mathbb{R}^n$ as inputs, then
\begin{equation*}
    \lVert \beta(u,v) - \widehat{\beta}_{\mathcal{D}}(u,v) \rVert_\infty \le (m+n+r+1) \gamma(\widehat{\beta}_{\mathcal{D}}) \lVert u \rVert \lVert v \rVert \boldsymbol{\mathsf{u}} + O(\boldsymbol{\mathsf{u}}^2).
\end{equation*}
\end{theorem}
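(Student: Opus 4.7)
The plan is to carry out a standard forward error analysis in the style of \cite{HighamBook}, tracking the floating-point error accumulated in each of the three stages of $\widehat{\beta}_{\mathcal{D}}$ and then converting the resulting componentwise bounds into the claimed $\ell^\infty$-bound via the dual-norm inequalities $|\varphi(u)| \le \lVert\varphi\rVert_*\lVert u\rVert$ and $|\psi(v)| \le \lVert\psi\rVert_*\lVert v\rVert$. The key observation is that the $m$, $n$, $r$, and $+1$ in the constant $(m+n+r+1)$ each come from a single and distinct source of rounding (the two inner products, the sequential summation, and the two scalar products in the middle), and so will add cleanly.

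Stage by stage: for step~(i), writing $\varphi_i(u) = \sum_{j=1}^m a_{ij}u_j$ via Riesz representation, the standard inner-product error bound \cite[Lemma~3.1]{HighamBook} yields $|\fl(\varphi_i(u)) - \varphi_i(u)| \le m\mathsf{u}\sum_j|a_{ij}||u_j| + O(\mathsf{u}^2) \le m\mathsf{u}\,\lVert\varphi_i\rVert_*\lVert u\rVert + O(\mathsf{u}^2)$, with the analogous $n\mathsf{u}$-bound for $\psi_i(v)$. For step~(ii), one additional $\mathsf{u}$-sized relative error arises from the scalar multiplication $\fl(\varphi_i(u))\cdot \fl(\psi_i(v))$, and one more from each of the $p$ componentwise products with the entries of $w_i$; combining these via \eqref{eq:float} and discarding $O(\mathsf{u}^2)$ terms gives
\begin{equation*}
\bigl|\fl(\varphi_i(u)\psi_i(v)w_i)_j - \varphi_i(u)\psi_i(v)(w_i)_j\bigr| \le (m+n+2)\mathsf{u}\,\lVert\varphi_i\rVert_*\lVert\psi_i\rVert_*\lVert u\rVert\lVert v\rVert\,|(w_i)_j| + O(\mathsf{u}^2).
\end{equation*}
Step~(iii) is a sequential sum of $r$ vectors, which contributes a further componentwise error of $(r-1)\mathsf{u}\sum_i |(t_i)_j|$; since $|(t_i)_j|$ matches $|\varphi_i(u)\psi_i(v)(w_i)_j|$ up to $O(\mathsf{u})$, this can be replaced to leading order and then combined with the step~(ii) bound via the triangle inequality to yield
\begin{equation*}
|\widehat{\beta}_{\mathcal{D}}(u,v)_j - \beta(u,v)_j| \le (m+n+r+1)\mathsf{u}\,\lVert u\rVert\lVert v\rVert \sum_{i=1}^r \lVert\varphi_i\rVert_*\lVert\psi_i\rVert_*\,|(w_i)_j| + O(\mathsf{u}^2).
\end{equation*}

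Taking $\max_j$ on the left and pushing the max inside the sum on the right (valid since all entries are nonnegative), then using $\max_j|(w_i)_j| = \lVert w_i\rVert_\infty \le \lVert w_i\rVert$, collects the growth factor $\gamma(\widehat{\beta}_{\mathcal{D}}) = \sum_i\lVert\varphi_i\rVert_*\lVert\psi_i\rVert_*\lVert w_i\rVert$ and completes the proof. I expect the main obstacle to be the careful bookkeeping needed to obtain the constant $(m+n+r+1) = m + n + 2 + (r-1)$ exactly as stated, with each source of rounding counted once and only once and with all genuinely second-order terms correctly aggregated into $O(\mathsf{u}^2)$. A subsidiary point is that the inner-product bound $\sum_j|a_{ij}||u_j| \le \lVert\varphi_i\rVert_*\lVert u\rVert$ and the inequality $\lVert w_i\rVert_\infty \le \lVert w_i\rVert$ both implicitly require the chosen norms to be monotone, a property enjoyed by every $p$-norm and hence by all norms used in the applications of Sections~\ref{sec:fmm} and~\ref{sec:cplx}.
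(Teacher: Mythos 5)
Your proposal is correct and follows essentially the same route as the paper's proof: Riesz representation and the Higham inner-product bound for step~(i), one roundoff each for the two multiplications in step~(ii), and the $(r-1)$-term error for the sequential sum in step~(iii), giving $m+n+2+(r-1) = m+n+r+1$. The only organizational difference is that the paper reduces to the scalar-output case $p=1$ at the outset by noting $\gamma(\widehat{\beta}_{\mathcal{D}}) \ge \sum_i \lVert\varphi_i\rVert_*\lVert\psi_i\rVert_*|w_{ik}|$ for each coordinate $k$, whereas you carry the componentwise analysis through and take $\max_j$ at the end; these are equivalent. Your closing remark about monotonicity of the norm is a fair caveat that the paper leaves implicit as well, needed both for $\lvert a_i\rvert^\tp\lvert u\rvert \le \lVert\varphi_i\rVert_*\lVert u\rVert$ and for $\lVert w_i\rVert_\infty \le \lVert w_i\rVert$.
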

\begin{proof}
We first show that the result reduces to the case $p = 1$. It suffices to show that
\begin{equation} \label{pf:gf:eq:red}
\lvert \beta(u,v)_k - \widehat{\beta}_{\mathcal{D}}(u,v)_k\rvert \le (m+n+r+1) \gamma(\widehat{\beta}_{\mathcal{D}}) \lVert u \rVert \lVert v \rVert \boldsymbol{\mathsf{u}} + O(\boldsymbol{\mathsf{u}}^2)
\end{equation}
for all $k = 1, \dots, p$, where the subscript $k$ refers to the $k$th coordinate of a vector in $\mathbb{R}^p$. Since
\[
\gamma(\widehat{\beta}_{\mathcal{D}}) = \sum_{i=1}^{r} \lVert \varphi_i \rVert_*\lVert \psi_i \rVert_*\lVert w_i \rVert \ge \sum_{i=1}^{r} \lVert \varphi_i \rVert_*\lVert \psi_i \rVert_* |w_{ik}|,
\]
with $w_{ik}$ the $k$th coordinate of $w_k \in \mathbb{R}^p$, to show \eqref{pf:gf:eq:red}, it suffices to show
\[
|\beta(u,v)_k - \widehat{\beta}_{\mathcal{D}}(u,v)_k| \le (m+n+r+1) \Biggl[\sum_{i=1}^{r} \lVert \varphi_i \rVert_*\lVert \psi_i \rVert_* |w_{ik}|\Biggr] \lVert u \rVert \lVert v \rVert \boldsymbol{\mathsf{u}} + O(\boldsymbol{\mathsf{u}}^2),
\]
which is equivalent to the case $p = 1$. In the following, we will assume that $p = 1$.

Since $\varphi_i$ and $\psi_i$ are linear functionals on $\mathbb{R}^m$ and $\mathbb{R}^n$, there exist $u_i \in \mathbb{R}^m$ and $v_i \in \mathbb{R}^n$ such that
\[
\varphi_i(u) = u_i^\tp  u \quad \textrm{and} \quad \psi_i(v) = v_i^\tp  v,
\]
for all $u \in \mathbb{R}^m$ and $v \in \mathbb{R}^n$. By \cite[equation~3.7]{HighamBook},
\[
|x^\tp y - \fl(x^\tp y)| \le n |x|^\tp |y| \boldsymbol{\mathsf{u}} + O(\boldsymbol{\mathsf{u}}^2),
\]
for any $x,y \in \mathbb{R}^n$ where  $|\cdot|$ applies coordinatewise. So for each $i = 1,\dots,r$,
\begin{equation} \label{pf:thm1:eq1}
\begin{split}
    |\varphi_i(u) - \fl(\varphi_i(u))| & = |u_i^\tp u - \fl(u_i^\tp u)| \le m |u_i|^\tp |u| \boldsymbol{\mathsf{u}} + O(\boldsymbol{\mathsf{u}}^2) \\
    &  \le m \lVert u_i \rVert \lVert u \rVert \boldsymbol{\mathsf{u}} + O(\boldsymbol{\mathsf{u}}^2) = m \lVert \varphi_i \rVert_* \lVert u \rVert \boldsymbol{\mathsf{u}} + O(\boldsymbol{\mathsf{u}}^2).
\end{split}
\end{equation}
Likewise, for each $i = 1,\dots,r$,
\begin{equation} \label{pf:thm1:eq2}
    |\psi_i(v) - \fl(\psi_i(v))| \le n \lVert \psi_i \rVert_* \lVert v \rVert \boldsymbol{\mathsf{u}} + O(\boldsymbol{\mathsf{u}}^2).
\end{equation}
Let $\Delta_{1,i} = \fl(\varphi_i(u)) - \varphi_i(u)$ and $\Delta_{2,i} = \fl(\psi_i(v)) - \psi_i(v)$. By \eqref{pf:thm1:eq1} and \eqref{pf:thm1:eq2},
\begin{equation} \label{pf:thm1:eq12}
    |\Delta_{1,i}| \le m \lVert \varphi_i \rVert_* \lVert u \rVert \boldsymbol{\mathsf{u}} + O(\boldsymbol{\mathsf{u}}^2), \quad
    |\Delta_{2,i}| \le n \lVert \psi_i \rVert_* \lVert v \rVert \boldsymbol{\mathsf{u}} + O(\boldsymbol{\mathsf{u}}^2).
\end{equation}
Let $c_i = \varphi_i(u)\psi_i(v)$ and $\widehat{c}_i$ be its computed value. By \eqref{pf:thm1:eq12}, there exists $\delta_i$ with $|\delta_i| \le \boldsymbol{\mathsf{u}}$ such that
\begin{equation} \label{pf:thm1:eq123}
\begin{split}
    \widehat{c}_i & = (\varphi_i(u) + \Delta_{1,i})(\psi_i(v)+\Delta_{2,i})(1+\delta_i) \\
    & = \varphi_i(u)\psi_i(v) + \Delta_{1,i} \psi_i(v) + \varphi_i(u) \Delta_{2,i} + \delta_i \varphi_i(u)\psi_i(v) + O(\boldsymbol{\mathsf{u}}^2).
\end{split}
\end{equation}
By \eqref{pf:thm1:eq12} and \eqref{pf:thm1:eq123},
\begin{equation} \label{pf:thm1:eq1234}
\begin{split}
    \quad |c_i - \widehat{c}_i|
    & \le m \lVert \varphi_i \rVert_* \lVert u \rVert |\psi_i(v)| \boldsymbol{\mathsf{u}} + |\varphi_i(u)| n \lVert \psi_i \rVert_* \lVert v \rVert \boldsymbol{\mathsf{u}} + |\varphi_i(u)\psi_i(v)| \boldsymbol{\mathsf{u}} + O(\boldsymbol{\mathsf{u}}^2)\\
    & \le (m+n+1) \lVert \varphi_i \rVert_* \lVert \psi_i \rVert_* \lVert u \rVert \lVert v \rVert \boldsymbol{\mathsf{u}} + O(\boldsymbol{\mathsf{u}}^2).
\end{split}
\end{equation}
Let $\Delta_i = \widehat{c}_i - c_i$.
By \eqref{pf:thm1:eq1234},
\begin{equation}  \label{pf:thm1:eq12345}
    |\Delta_i| \le (m+n+1) \lVert \varphi_i \rVert_* \lVert \psi_i \rVert_* \lVert u \rVert \lVert v \rVert \boldsymbol{\mathsf{u}} + O(\boldsymbol{\mathsf{u}}^2).
\end{equation}
Let $d_i = c_i w_i$ and $\widehat{d}_i$ be the computed value of $d_i$.
By \eqref{pf:thm1:eq12345}, there exists $\delta_i'$ with $|\delta_i'|\le \boldsymbol{\mathsf{u}}$ such that
\begin{equation}\label{pf:thm1:eq4}
    \widehat{d}_i = (c_i+\Delta_i)w_i(1+\delta_i') = c_i w_i + \Delta_i w_i + \delta_i' c_i w_i + O(\boldsymbol{\mathsf{u}}^2).
\end{equation}
Let $\Delta_i' = \widehat{d}_i - d_i$. By \eqref{pf:thm1:eq12345} and \eqref{pf:thm1:eq4},
\begin{equation} \label{pf:thm1:eq3}
\begin{split}
    \quad |\Delta_i'|
    & \le (m+n+1) \lVert \varphi_i \rVert_* \lVert \psi_i \rVert_* \lVert u \rVert \lVert v \rVert |w_i|  \boldsymbol{\mathsf{u}} + |\varphi_i(u)\psi_i(v)| |w_i| \boldsymbol{\mathsf{u}} + O(\boldsymbol{\mathsf{u}}^2) \\
    & \le (m+n+2) \lVert \varphi_i \rVert_* \lVert \psi_i \rVert_* |w_i| \lVert u \rVert \lVert v \rVert \boldsymbol{\mathsf{u}} + O(\boldsymbol{\mathsf{u}}^2).
\end{split}
\end{equation}
Finally, let $a = \sum_{i=1}^r\varphi_i(u)\psi_i(v)w_i$ and $\widehat{a}$ be the computed value of $a$.
By \eqref{pf:thm1:eq3}, there exists $\delta$ with $|\delta| \le \boldsymbol{\mathsf{u}}$ such that
\[
\begin{split}
    \widehat{a} & = \widehat{d}_1 (1+\delta)^{r-1} + \widehat{d}_2 (1+\delta)^{r-1} + \widehat{d}_3 (1+\delta)^{r-2} + \dots + \widehat{d}_r (1+\delta),
\end{split}
\]
where we compute the sum $\widehat{d}_1 + \widehat{d}_2 + \dots +\widehat{d}_r$ from left to right. Hence we obtain
\[
\begin{split}
    |a - \widehat{a}|
    & \le (m+n+2) \lVert u \rVert \lVert v \rVert \sum_{i = 1}^r \lVert \varphi_i \rVert_* \lVert \psi_i \rVert_* |w_i| \boldsymbol{\mathsf{u}} + (r-1)  \biggl \lVert \sum_{i = 1}^r c_i w_i \biggr \rVert  \boldsymbol{\mathsf{u}}+ O(\boldsymbol{\mathsf{u}}^2) \\
    & \le (m+n+r+1) \lVert u \rVert \lVert v \rVert \sum_{i = 1}^r \lVert \varphi_i \rVert_* \lVert \psi_i \rVert_* |w_i| \boldsymbol{\mathsf{u}} + O(\boldsymbol{\mathsf{u}}^2) \\
    & = (m+n+r+1) \gamma(\widehat{\beta}_{\mathcal{D}}) \lVert u \rVert \lVert v \rVert  \boldsymbol{\mathsf{u}}+ O(\boldsymbol{\mathsf{u}}^2). \qedhere
\end{split}
\]
\end{proof}

Theorem~\ref{thm:main} essentially says that that algorithms with small growth factors have small forward errors. Combined with Proposition~\ref{prop:nuclear}, we see that the optimally stable algorithm  in this context is the one corresponding to a nuclear decomposition of $\beta$.
\begin{corollary}[Tensor nuclear norm and forward error]\label{cor:main}
Let $\beta: \mathbb{R}^n \times \mathbb{R}^m \to \mathbb{R}^p$ be a bilinear operator, $\mathcal{D} = (\varphi_i,\psi_i,w_i)_{i = 1}^r$ a nuclear decomposition, and $\widehat{\beta}_{\mathcal{D}}$ the  corresponding algorithm. Then
\[
    \lVert \beta(u,v) - \widehat{\beta}_{\mathcal{D}}(u,v) \rVert_\infty \le (m+n+r+1) \lVert \beta \rVert_\nu \lVert u \rVert \lVert v \rVert \boldsymbol{\mathsf{u}}  + O(\boldsymbol{\mathsf{u}}^2).
\]
\end{corollary}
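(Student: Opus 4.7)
The plan is to deduce this as an immediate consequence of Theorem~\ref{thm:main} and Proposition~\ref{prop:nuclear}, with essentially nothing more than a substitution. Theorem~\ref{thm:main} provides, for \emph{any} decomposition $\mathcal{D} = (\varphi_i,\psi_i,w_i)_{i=1}^r$ of $\beta$ and the corresponding bilinear algorithm $\widehat{\beta}_{\mathcal{D}}$, the forward error bound
\[
\lVert \beta(u,v) - \widehat{\beta}_{\mathcal{D}}(u,v) \rVert_\infty \le (m+n+r+1)\, \gamma(\widehat{\beta}_{\mathcal{D}})\, \lVert u \rVert\, \lVert v \rVert\, \mathsf{u} + O(\mathsf{u}^2).
\]
The only additional ingredient is the observation that, for the specific choice of $\mathcal{D}$ assumed in the corollary — a nuclear decomposition — the growth factor collapses to the nuclear norm.

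The plan therefore has two steps. First I would invoke Proposition~\ref{prop:nuclear}, which tells us that the minimum of $\gamma(\widehat{\beta}_{\mathcal{D}})$ over all decompositions equals $\lVert \beta \rVert_\nu$, and that a nuclear decomposition realizes this minimum; so if $\mathcal{D}$ is such a decomposition then $\gamma(\widehat{\beta}_{\mathcal{D}}) = \lVert \beta \rVert_\nu$. Second I would apply Theorem~\ref{thm:main} to this particular $\mathcal{D}$ and substitute the equality from the first step into the right-hand side of the bound. The resulting inequality is exactly the statement to be proved.

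I do not expect any genuine obstacle here, since all the analytic work — tracking the rounding errors across the three stages (evaluation of the functionals $\varphi_i, \psi_i$, the variable multiplications, and the final summation) — has already been carried out in Theorem~\ref{thm:main}, and the attainment of the infimum in the definition of $\lVert \beta \rVert_\nu$ was settled in the discussion following Proposition~\ref{prop:nuclear}. The corollary is really just a clean restatement of Theorem~\ref{thm:main} specialized to the stablest decomposition, serving to make the link between tensor nuclear norm and forward error explicit.
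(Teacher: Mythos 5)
Your proposal is correct and is precisely the route the paper intends: the corollary is stated immediately after Theorem~\ref{thm:main} with no separate proof, exactly because it is obtained by taking $\mathcal{D}$ to be a nuclear decomposition (which exists and attains the infimum by Proposition~\ref{prop:nuclear}) and substituting $\gamma(\widehat{\beta}_{\mathcal{D}}) = \lVert \beta \rVert_\nu$ into the bound of the theorem.
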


In principle, there is no reason to expect there to be an algorithm that is both fastest in the sense of Section~\ref{sec:speed} and stablest in the sense of this section, i.e., having a decomposition that attains both tensor rank and nuclear norm.  In Section~\ref{sec:cplx}, we will see that such an algorithm exists for complex multiplication and we will study its properties when applied to complex matrix multiplication.

\section{Fast matrix multiplications}\label{sec:fmm}

As an illustration of bilinear stability in the last section, we will calculate the growth factors of Strassen's algorithm \cite{strassen} and Winograd's variant \cite{HighamBook,winograd} for fast matrix multiplication and compare their stability empirically. We will see that the growth factor of Strassen's algorithm is smaller than that of Winograd's variant, and, consistent with the prediction of Theorem~\ref{thm:main}, numerical experiments indeed show that the former gives more accurate results. 

\subsection{Bilinear stability of Strassen multiplication}

Given two block matrices
\[
    A = \begin{bmatrix}
    A_{11} & A_{12} \\
    A_{21} & A_{22}
    \end{bmatrix},
    \quad
    B = \begin{bmatrix}
    B_{11} & B_{12} \\
    B_{21} & B_{22}
    \end{bmatrix},
\]
Strassen's algorithm \cite{strassen} first computes
\begin{alignat*}{2}
     M_1 &= (A_{11}+A_{22})(B_{11}+B_{22}), \qquad &
     M_5 &= (A_{11}+A_{12})B_{22}, \\
     M_2 &= (A_{21}+A_{22})B_{11}, &
     M_6 &= (A_{21}-A_{11})(B_{11}+B_{12}), \\
     M_3 &= A_{11}(B_{12}-B_{22}), &
     M_7 &= (A_{12}-A_{22})(B_{21}+B_{22}), \\
     M_4 &= A_{22}(B_{21}-B_{11}), &
\end{alignat*}
and then computes the product via
\[
AB =
    \begin{bmatrix}
    M_1+M_4-M_5+M_7 & M_3+M_5 \\
    M_2+M_4 & M_1-M_2+M_3+M_6
    \end{bmatrix}.
\]
Note that this may be applied recursively. Let $\widehat{\beta}_\St : \mathbb{R}^{2 \times 2} \times  \mathbb{R}^{2 \times 2} \to  \mathbb{R}^{2 \times 2} $ denote the Strassen's algorithm for $2 \times 2$ matrices. It is routine to check that for $A, B \in \mathbb{R}^{2 \times 2}$,
\[
    \widehat{\beta}_\St(A,B) = \sum_{i = 1}^7 \varphi_i(A)\psi_i(B)W_i,
\]
where $\varphi_i(A) = \tr(U_i^\tp  A)$ and $\psi_i(B) = \tr(V_i^\tp  B)$ with
\begin{alignat*}{4}
    U_1 &= 
    \begin{bmatrix}
    1 & 0 \\
    0 & 1
    \end{bmatrix},
    \quad
    & V_1 &= 
    \begin{bmatrix}
    1 & 0 \\
    0 & 1
    \end{bmatrix},
    \quad
    & W_1 &= 
    \begin{bmatrix}
    1 & 0 \\
    0 & 1
    \end{bmatrix}; \\
    U_2 &= 
    \begin{bmatrix}
    0 & 0 \\
    1 & 1
    \end{bmatrix},
    \quad
    & V_2 &= 
    \begin{bmatrix}
    1 & 0 \\
    0 & 0
    \end{bmatrix},
    \quad
    & W_2 &= 
    \begin{bmatrix}
    0 & 0 \\
    1 & -1
    \end{bmatrix};\\
    U_3 &= 
    \begin{bmatrix}
    1 & 0 \\
    0 & 0
    \end{bmatrix},
    \quad
    & V_3 &= 
    \begin{bmatrix}
    0 & 1 \\
    0 & -1
    \end{bmatrix},
    \quad
    & W_3 &= 
    \begin{bmatrix}
    0 & 1 \\
    0 & 1
    \end{bmatrix};\\
    U_4 &= 
    \begin{bmatrix}
    0 & 0 \\
    0 & 1
    \end{bmatrix},
    \quad
    & V_4 &= 
    \begin{bmatrix}
    -1 & 0 \\
    1 & 0
    \end{bmatrix},
    \quad
    & W_4 &= 
    \begin{bmatrix}
    1 & 0 \\
    1 & 0
    \end{bmatrix};\\
    U_5 &= 
    \begin{bmatrix}
    1 & 1 \\
    0 & 0
    \end{bmatrix},
    \quad
    & V_5 &= 
    \begin{bmatrix}
    0 & 0 \\
    0 & 1
    \end{bmatrix},
    \quad
    & W_5 &= 
    \begin{bmatrix}
    -1 & 1 \\
    0 & 0
    \end{bmatrix};\\
    U_6 &= 
    \begin{bmatrix}
    -1 & 0 \\
    1 & 0
    \end{bmatrix},
    \quad
    & V_6 &= 
    \begin{bmatrix}
    1 & 1 \\
    0 & 0
    \end{bmatrix},
    \quad
    & W_6 &= 
    \begin{bmatrix}
    0 & 0 \\
    0 & 1
    \end{bmatrix};\\
    U_7 &= 
    \begin{bmatrix}
    0 & 1 \\
    0 & -1
    \end{bmatrix},
    \quad
    & V_7 &= 
    \begin{bmatrix}
    0 & 0 \\
    1 & 1
    \end{bmatrix},
    \quad
    & W_7 &= 
    \begin{bmatrix}
    1 & 0 \\
    0 & 0
    \end{bmatrix}.
\end{alignat*}
For simplicity we will use the Frobenius norm on $\mathbb{R}^{2 \times 2}$ since it is self dual. The growth factor of Strassen's algorithm is then given by
\begin{equation}\label{strassen:eq:3}
\gamma(\widehat{\beta}_\St)  = \sum_{i = 1}^7 \lVert \varphi_i \rVert_* \lVert \psi_i \rVert_* \lVert W_i \rVert  = \sum_{i = 1}^7 \lVert U_i \rVert_\F \lVert V_i \rVert_\F \lVert W_i \rVert_\F \\
 = 12 + 2 \sqrt{2} \approx 14.83.
\end{equation}

\subsection{Bilinear stability of Winograd multiplication}

Winograd's algorithm \cite{HighamBook,winograd} computes a different set of intermediate quantities
\begin{alignat*}{2}
     M'_1 &= (A_{21}+A_{22}-A_{11})(B_{11}+B_{22}-B_{12}), \qquad &
     M'_5 &= (A_{21}+A_{22})(B_{12}-B_{11}), \\
     M'_2 &= A_{11}B_{11}, &
     M'_6 &= (A_{11}+A_{12}-A_{21}-A_{22})B_{22}, \\
     M'_3 &= A_{12}B_{21}, &
     M'_7 &= A_{22}(B_{11}+B_{22}-B_{12}-B_{21}), \\
     M'_4 &= (A_{11}-A_{21})(B_{22}-B_{12}), &
\end{alignat*}
and then compute the product via
\[
AB = 
    \begin{bmatrix}
    M'_2+M'_3 & M'_1+M'_2+M'_5+M'_6 \\
    M'_1+M'_2+M'_4-M'_7 & M'_1+M'_2+M'_4+M'_5
    \end{bmatrix}.
\]
Again this can be applied recursively.  Let $\widehat{\beta}_\W : \mathbb{R}^{2 \times 2} \times  \mathbb{R}^{2 \times 2} \to  \mathbb{R}^{2 \times 2} $ denote the Winograd's algorithm for $2 \times 2$ matrices. It is again routine to check that for $A, B \in \mathbb{R}^{2 \times 2}$,
\[
    \widehat{\beta}_\W(A,B) = \sum_{i = 1}^7 \varphi_i'(A)\psi_i'(B)W_i',
\]
where $\varphi_i'(A) = \tr(U_i^{\prime\tp}  A)$ and $\psi_i'(B) = \tr(V_i^{\prime\tp}  B)$ with
\begin{alignat*}{4}
    U_1' &= 
    \begin{bmatrix}
    -1 & 0 \\
    1 & 1
    \end{bmatrix},
    \quad
    & V_1' &= 
    \begin{bmatrix}
    1 & -1 \\
    0 & 1
    \end{bmatrix},
    \quad
    & W_1' &= 
    \begin{bmatrix}
    0 & 1 \\
    1 & 1
    \end{bmatrix}; \\
    U_2' &= 
    \begin{bmatrix}
    1 & 0 \\
    0 & 0
    \end{bmatrix},
    \quad
    & V_2' &= 
    \begin{bmatrix}
    1 & 0 \\
    0 & 0
    \end{bmatrix},
    \quad
    & W_2' &= 
    \begin{bmatrix}
    1 & 1 \\
    1 & 1
    \end{bmatrix};\\
    U_3' &= 
    \begin{bmatrix}
    0 & 1 \\
    0 & 0
    \end{bmatrix},
    \quad
    & V_3' &= 
    \begin{bmatrix}
    0 & 0 \\
    1 & 0
    \end{bmatrix},
    \quad
    & W_3' &= 
    \begin{bmatrix}
    1 & 0 \\
    0 & 0
    \end{bmatrix};\\
    U_4' &= 
    \begin{bmatrix}
    1 & 0 \\
    -1 & 0
    \end{bmatrix},
    \quad
    & V_4' &= 
    \begin{bmatrix}
    0 & -1 \\
    0 & 1
    \end{bmatrix},
    \quad
    & W_4' &= 
    \begin{bmatrix}
    0 & 0 \\
    1 & 1
    \end{bmatrix};\\
    U_5' &= 
    \begin{bmatrix}
    0 & 0 \\
    1 & 1
    \end{bmatrix},
    \quad
    & V_5' &= 
    \begin{bmatrix}
    -1 & 1 \\
    0 & 0
    \end{bmatrix},
    \quad
    & W_5' &= 
    \begin{bmatrix}
    0 & 1 \\
    0 & 1
    \end{bmatrix};\\
    U_6' &= 
    \begin{bmatrix}
    1 & 1 \\
    -1 & -1
    \end{bmatrix},
    \quad
    & V_6' &= 
    \begin{bmatrix}
    0 & 0 \\
    0 & 1
    \end{bmatrix},
    \quad
    & W_6' &= 
    \begin{bmatrix}
    0 & 1 \\
    0 & 0
    \end{bmatrix};\\
    U_7' &= 
    \begin{bmatrix}
    0 & 0 \\
    0 & 1
    \end{bmatrix},
    \quad
    & V_7' &= 
    \begin{bmatrix}
    1 & -1 \\
    -1 & 1
    \end{bmatrix},
    \quad
    & W_7' &= 
    \begin{bmatrix}
    0 & 0 \\
    -1 & 0
    \end{bmatrix}.
\end{alignat*}
With respect to the Frobenius norm, the growth factor of Winograd's algorithm is
\begin{equation}\label{winograd:eq:3}
\gamma(\widehat{\beta}_\W) = \sum_{i = 1}^7 \lVert \varphi_i' \rVert_* \lVert \psi_i' \rVert_* \lVert W_i' \rVert_\F 
 = \sum_{i = 1}^7 \lVert U_i' \rVert_\F \lVert V_i' \rVert_\F \lVert W_i' \rVert_\F \\
 = 7 + 4\sqrt{2} + 3 \sqrt{3} \approx 17.85.
\end{equation}

\subsection{Bilinear stability of conventional matrix multiplication}

For completeness we state the growth factor of the conventional algorithm for matrix multiplication $\widehat{\beta}_\C : \mathbb{R}^{2 \times 2} \times  \mathbb{R}^{2 \times 2} \to  \mathbb{R}^{2 \times 2} $,
\[
    \widehat{\beta}_\C(A,B) = \sum_{i,j,k = 1}^2 \tr(E_{ij}^\tp A)\tr(E_{jk}^\tp B) E_{ik},
\]
where $E_{ij} \in \mathbb{R}^{2 \times 2}$ denotes the standard basis matrix.
Its growth factor is easily seen to be
\[
\gamma(\widehat{\beta}_\C) = \sum_{i,j,k = 1}^2 \lVert E_{ij} \rVert_\F \lVert E_{jk} \rVert_\F \lVert E_{ik} \rVert_\F = 8.
\]
From \eqref{strassen:eq:3} and \eqref{winograd:eq:3}, we see that
\begin{equation}\label{strassen:winograd}
    \gamma(\widehat{\beta}_\W) > \gamma(\widehat{\beta}_\St) > \gamma(\widehat{\beta}_\C).
\end{equation}
The first inequality will be verified in the numerical experiments below; the second is consistent with the well-known fact \cite{HighamBook} that Strassen's algorithm is less stable than conventional multiplication.
In this case, the conventional algorithm attains the nuclear norm of two by two matrix multiplication, which has value $8$ \cite{Derksen}.

\subsection{Numerical experiments for fast matrix multiplications}

By Theorem~\ref{thm:main} and the sizes of the growth factors in \eqref{strassen:winograd}, we expect Strassen's algorithm to give more accurate results than Winograd's variant since it has a smaller growth factor. We test this statement with random matrices generated in three different ways: with (a) real entries drawn from the uniform distribution on $[-1,1]$, (b) real entries drawn from the standard normal distribution, (c) complex entries whose real and imaginary parts are drawn from the uniform distribution on $[-1,1]$. In the last case, note that our earlier discussions over $\mathbb{R}$ apply verbatim over $\mathbb{C}$ with the same growth factors. 
\begin{figure}[!hbt]
    \centering
    \begin{subfigure}[b]{0.49 \textwidth}
         \centering
         \includegraphics[width = \textwidth]{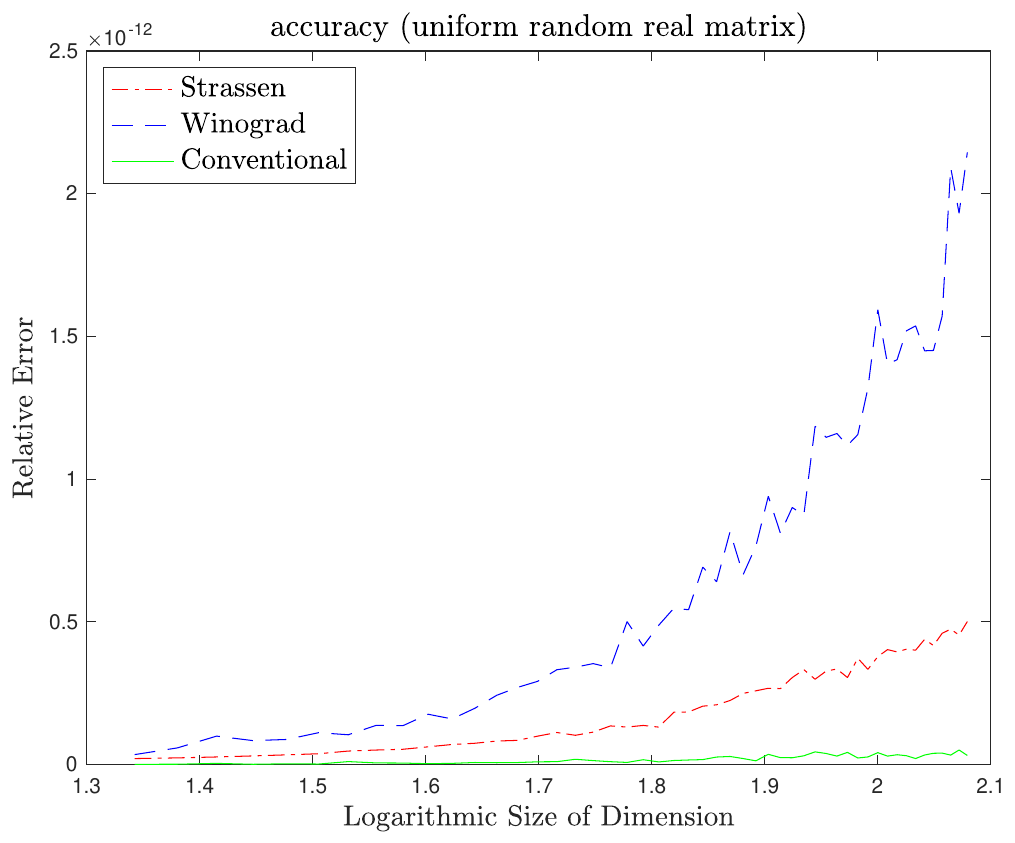}
         \caption{Real random matrices $\mathcal{U}[-1,1]$.}
    \end{subfigure}
    \hfill
    \begin{subfigure}[b]{0.49 \textwidth}
         \centering
         \includegraphics[width = \textwidth]{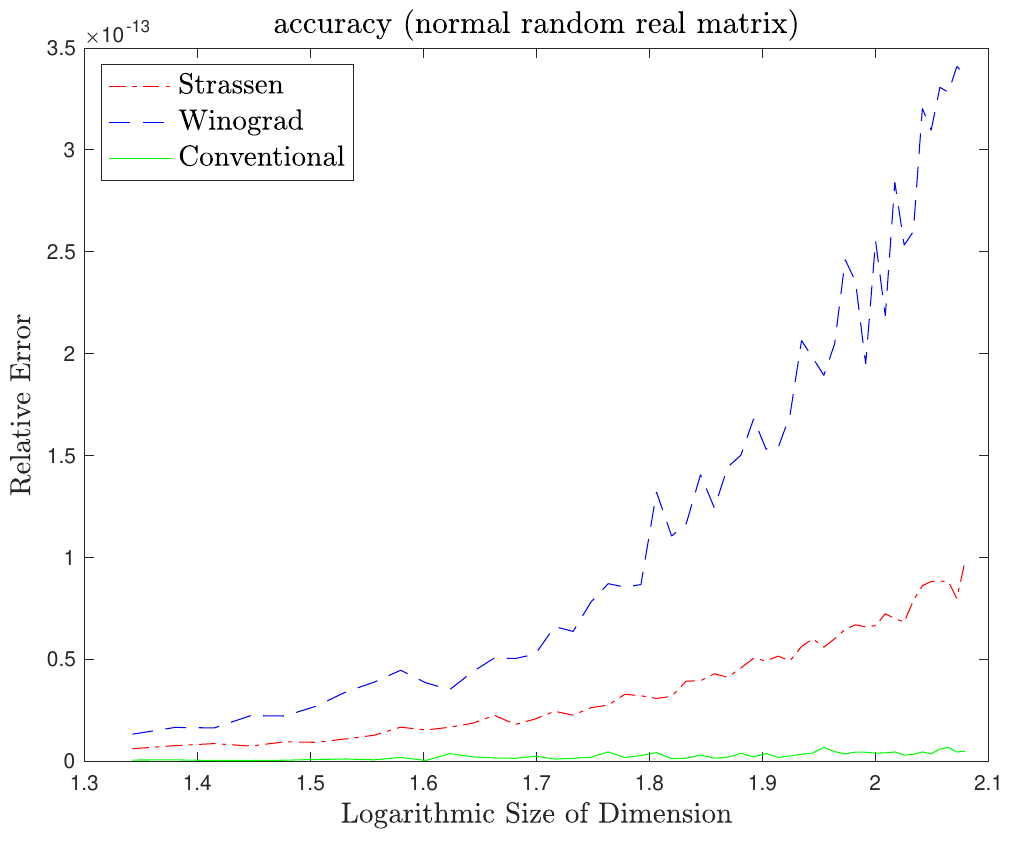}
         \caption{Real random matrices $\mathcal{N}(0,1)$.}
    \end{subfigure}
    \hfill
    \begin{subfigure}[b]{0.49 \textwidth}
         \centering
         \includegraphics[width = \textwidth]{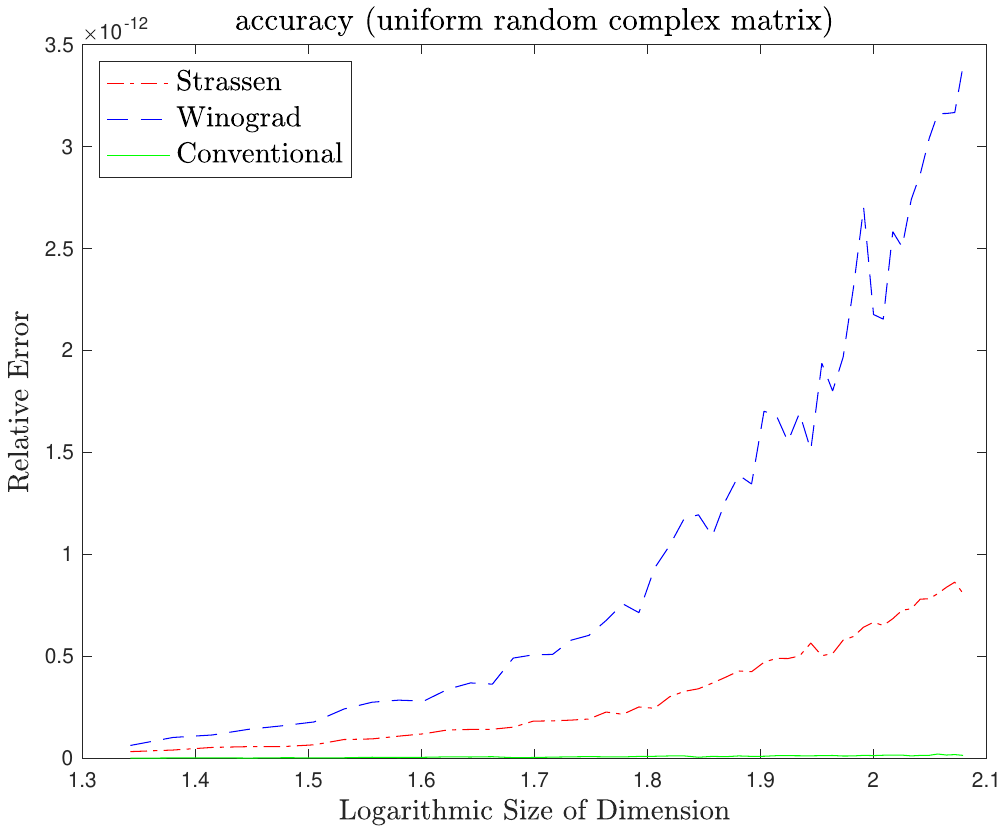}
         \caption{Complex random matrices $\mathcal{U}[-1,1] + \mathcal{U}[-1,1]i$.}
    \end{subfigure}
    \caption{Accuracy of Strassen's algorithm and Winograd's variant.}
    \label{fig:SW}
\end{figure}

In all cases, we compute $\widehat{\beta}_\St(A,B)$ and $\widehat{\beta}_\W(A,B)$ using Strassen's algorithm and Winograd's variant  respectively and compare the results against the exact value $\beta(A,B) = AB$ computed using the \textsc{Matlab} symbolic toolbox. From Figure~\ref{fig:SW}, we see that Strassen's algorithm is indeed more stable than Winograd's variant, substantiating Theorem~\ref{thm:main}. Even though the $14.83$ growth factor of Strassen's algorithm appears to differ only moderately from the $17.85$ growth factor of Winograd's variant, the effect is magnified multifold as a result of recursion --- these algorithms are applied recursively to an $n \times n$ matrix as a block $2 \times 2$ matrix $\lfloor \log_2 n \rfloor$ times. The conventional algorithm, which has a growth factor of $8$, is included in these plots for comparison.

\section{Complex multiplication}\label{sec:cplx}

As described towards the end of Section~\ref{sec:speed}, complex multiplication is an $\mathbb{R}$-bilinear operator $\beta_\mathbb{C} \in \mathbb{R}^2 \times \mathbb{R}^2 \to \mathbb{R}^2$  when we identify $\mathbb{C}\cong \mathbb{R}^2$, with the standard basis vectors in $\mathbb{R}^2$
\[
e_1 = \begin{bmatrix} 1 \\ 0 \end{bmatrix}, \qquad e_2 = \begin{bmatrix} 0 \\ 1 \end{bmatrix}
\]
corresponding to $1, i \in \mathbb{C}$. We write $e_1^*, e_2^* : \mathbb{R}^2 \to \mathbb{R}$ for the dual basis, i.e., linear functionals with
\[
e_1^*\biggl(\begin{bmatrix} a \\ b \end{bmatrix}\biggr) = a, \qquad
e_2^*\biggl(\begin{bmatrix} a \\ b \end{bmatrix}\biggr) = b.
\]

We will denote the regular algorithm $(a + bi)(c+ di) = (ac - bd) + i (bc+ad)$, Gauss's algorithm \eqref{eq:gauss}, and our new algorithm  \eqref{eq:ournew} by $\widehat{\beta}_\R, \widehat{\beta}_\G, \widehat{\beta}_\N$ respectively. For easy reference,
\begin{align*}
\widehat{\beta}_\R \biggl(\begin{bmatrix} a \\ b \end{bmatrix}, \begin{bmatrix} c \\ d \end{bmatrix}\biggr) &= 
\begin{bmatrix} ac - bd \\ bc+ad \end{bmatrix},\qquad \widehat{\beta}_\G \biggl(\begin{bmatrix} a \\ b \end{bmatrix}, \begin{bmatrix} c \\ d \end{bmatrix}\biggr) = \begin{bmatrix} ac - bd \\(a+b)(c+d)- ac -bd \end{bmatrix},\\[2ex]
\widehat{\beta}_\N \biggl(\begin{bmatrix} a \\ b \end{bmatrix}, \begin{bmatrix} c \\ d \end{bmatrix}\biggr) &= 
\begin{bmatrix}\frac{1}{2} \Bigl[
\Bigl(a + \frac{1}{\sqrt{3}}b\Bigr)
\Bigl(c + \frac{1}{\sqrt{3}}d\Bigr)
+\Bigl(a - \frac{1}{\sqrt{3}}b\Bigr)
\Bigl(c -  \frac{1}{\sqrt{3}}d\Bigr)
-\frac{8}{3}bd
\Bigr] \\[2ex]
\frac{i\sqrt{3}}{2} \Bigl[
\Bigl(a + \frac{1}{\sqrt{3}}b\Bigr)
\Bigl(c + \frac{1}{\sqrt{3}}d\Bigr)
-\Bigl(a - \frac{1}{\sqrt{3}}b\Bigr)
\Bigl(c - \frac{1}{\sqrt{3}}d\Bigr)
\Bigr]
\end{bmatrix}.
\end{align*}
They correspond to the decompositions
\begin{align}
\widehat{\beta}_\R &= (e_1^* \otimes e_1^* -e_2^* \otimes  e_2^* ) \otimes  e_1 +  (e_1^* \otimes  e_2^* + e_2^* \otimes   e_1^* ) \otimes e_2, \label{eq:standarddecomp} \\
\widehat{\beta}_\G &= (e_1^*+e_2^*) \otimes  (e_1^*+e_2^*) \otimes  e_2+ e_1^* \otimes e_1^* \otimes (e_1-e_2) - e_2^* \otimes  e_2^* \otimes (e_1+e_2), \label{eq:Gaussdecomp}\\
\widehat{\beta}_\N &= \frac{4}{3}\biggl(\biggl[\frac{\sqrt{3}}{2}e_1^*  +\frac{1}{2}e_2^*  \biggr] \otimes
\biggl[\frac{\sqrt{3}}{2}e_1^*  +\frac{1}{2}e_2^*  \biggr] \otimes \biggl[\frac{1}{2}e_1+\frac{\sqrt{3}}{2}e_2\biggr] \nonumber \\
&\qquad+ \biggl[\frac{\sqrt{3}}{2}e_1^*  -\frac{1}{2}e_2^*  \biggr] \otimes \biggl[\frac{\sqrt{3}}{2}e_1^*  -\frac{1}{2}e_2^*  \biggr] \otimes \biggl[\frac{1}{2}e_1-\frac{\sqrt{3}}{2}e_2\biggr]
-e_2^* \otimes e_2^* \otimes  e_1\biggr). \label{eq:FLdecomp}
\end{align}

\subsection{Bilinear stability of complex multiplication algorithms}

Recall from Section~\ref{sec:speed} that $\rank(\beta_\mathbb{C}) = 3 = \brank(\beta_\mathbb{C})$, i.e., both Gauss's algorithm and our new algorithm have optimal bilinear complexity whether in the exact or approximate sense. One may also show that $\beta_\mathbb{C}$ has nuclear norm \cite[Lemma~6.1]{nuclear} is given by
\[
\lVert \beta_\mathbb{C}\rVert_{\nu} = 4.
\]
The growth factor of the regular algorithm \eqref{eq:standarddecomp} attains this minimum value,
\begin{align*}
\gamma(\widehat{\beta}_\R) &= \lVert e_1^*\rVert_*\lVert e_1^*\rVert_*\lVert  e_1\rVert  + \lVert - e_2^* \rVert_*\lVert  e_2^*\rVert_*\lVert  e_1 \rVert + \lVert e_1^*\rVert_*\lVert  e_2^*\rVert_*\lVert  e_2\rVert  +\lVert e_2^* \rVert_*\lVert  e_1^*\rVert_*\lVert  e_2\rVert \\
 &= 4 = \lVert \beta_\mathbb{C}\rVert_{\nu},
\end{align*}
as does our new algorithm \eqref{eq:FLdecomp},
\begin{align*}
\gamma(\widehat{\beta}_\N) &=\frac{4}{3}\biggl(\biggl\lVert \frac{\sqrt{3}}{2}e_1^*  +\frac{1}{2}e_2^*  \biggr\rVert_*
\biggl\lVert \frac{\sqrt{3}}{2}e_1^*  +\frac{1}{2}e_2^*  \biggr\rVert_* \biggl\lVert \frac{1}{2}e_1+\frac{\sqrt{3}}{2}e_2\biggr\rVert \\
&\qquad+ \biggl\lVert \frac{\sqrt{3}}{2}e_1^*  -\frac{1}{2}e_2^*  \biggr\rVert_*  \biggl\lVert \frac{\sqrt{3}}{2}e_1^*  -\frac{1}{2}e_2^*  \biggr\rVert_*   \biggl\lVert \frac{1}{2}e_1-\frac{\sqrt{3}}{2}e_2\biggr\rVert 
+ \lVert e_2^*\rVert_*\lVert   e_2^* \rVert_*\lVert  e_1 \rVert \biggr)\\
&= 4 = \lVert \beta_\mathbb{C}\rVert_{\nu},
\end{align*}
but not  Gauss's algorithm \eqref{eq:Gaussdecomp},
\begin{align*}
\gamma(\widehat{\beta}_\G) &= \lVert e_1^*+e_2^*\rVert_*\lVert  e_1^*+e_2^*\rVert_*\lVert  e_2 \rVert +\lVert e_1^*\rVert_*\lVert  e_1^*\rVert_*\lVert  e_1-e_2\rVert
 + \lVert - e_2^*\rVert_*\lVert  e_2^*\rVert_*\lVert  e_1+e_2\rVert \\
&=2(1+ \sqrt{2}) > \lVert \beta_\mathbb{C}\rVert_{\nu}.
\end{align*}
So Gauss's algorithm $\widehat{\beta}_\G$ is faster (by bilinear complexity) but less stable (by bilinear stability) than the regular algorithm. Our new algorithm $\widehat{\beta}_\N$ on the other hand is optimal in both measures, attaining both $\rank(\beta_\mathbb{C})$ and $\lVert \beta_\mathbb{C}\rVert_{\nu}$.

We stress that numerical stability is too complicated an issue to be completely covered by the simple framework of bilinear stability. For instance, from the perspective of cancellation errors, our new algorithm also suffers from the issue pointed out in \cite[Section~23.2.4]{HighamBook} for Gauss's algorithm. By choosing $z=w$ and $b =\sqrt{3}/a$, our algorithm \eqref{eq:FLdecomp}  computes
\[
\frac{1}{2} \Bigl[
\Bigl(a + \frac{1}{a}\Bigr)^2
+\Bigl(a - \frac{1}{a}\Bigr)^2
-\frac{8}{a^2}
\Bigr] + \frac{i\sqrt{3}}{2} \Bigl[
\Bigl(a + \frac{1}{a}\Bigr)^2
-\Bigl(a - \frac{1}{a}\Bigr)^2
\Bigr]\eqqcolon x + iy.
\]
There will be cancellation error in the computed real part $\widehat{x}$ when $\lvert a \rvert$ is small and likewise in the computed imaginary part $\widehat{y}$ when $\lvert a \rvert$ is large. Nevertheless, as discussed in \cite[Section~23.2.4]{HighamBook}, the new algorithm \eqref{eq:FLdecomp} is still stable in the weaker sense of having acceptably small $\lvert x - \widehat{x} \rvert/\lvert z \rvert$ and $\lvert y - \widehat{y} \rvert/\lvert z \rvert$ even if $\lvert x - \widehat{x} \rvert/\lvert x \rvert$ or $\lvert y - \widehat{y} \rvert/\lvert y \rvert$ might be large.

\subsection{Error analysis of new algorithm applied to matrices}\label{sec:roundcplx}

While using Gauss's algorithm or our new algorithm for multiplying of complex \emph{numbers} is a pointless overkill, they become useful when applied to the multiplication of complex \emph{matrices}. Note that any complex matrices $A + iB, C + i D \in \mathbb{C}^{n \times n}$ may be multiplied via their real and imaginary parts $A,B,C,D \in \mathbb{R}^{n \times n}$:
\begin{equation}\label{eq:Regular}
(A + iB)(C+ iD)= (AC - BD) + i[AD + BC],
\end{equation}
allowing us to focus our attention on designing algorithms for real matrix products. In this regard, Gauss's algorithm applied in the form
\begin{equation}\label{eq:Gauss}
(A + iB)(C+ iD)= (AC - BD) + i[(A+B)(C+D) -AC -BD]
\end{equation}
reduces the number of real matrix products from four to three at the expense of more matrix additions. This represents an enormous saving as matrix products are invariably much more expensive than matrix additions.
Our new algorithm \eqref{eq:ournew} likewise applies in the form
\begin{align}
(A + iB)(C+ iD) &= \frac{1}{2} \biggl[
\biggl(A + \frac{1}{\sqrt{3}}B\biggr)
\biggl(C + \frac{1}{\sqrt{3}}D\biggr)
+\biggl(A - \frac{1}{\sqrt{3}}B\biggr)
\biggl(C -  \frac{1}{\sqrt{3}}D\biggr)
-\frac{8}{3}BD
\biggr] \nonumber\\
&\qquad+
\frac{i\sqrt{3}}{2} \biggl[
\biggl(A + \frac{1}{\sqrt{3}}B\biggr)
\biggl(C + \frac{1}{\sqrt{3}}D\biggr)
-\biggl(A - \frac{1}{\sqrt{3}}B\biggr)
\biggl(C - \frac{1}{\sqrt{3}}D\biggr)
\biggr], \label{eq:New}
\end{align}
trading expensive matrix products for inexpensive scalar multiplications and additions.

The following is an error analysis of \eqref{eq:New}, i.e., our new algorithm applied to complex matrix multiplication. We emulate a similar analysis for Gauss's algorithm in \cite{Higham,HighamBook}, assuming in particular that the real matrix multiplications involved are performed using the conventional algorithm (as opposed to Strassen's or Winograd's). We remind the reader that conventional matrix multiplication has the simple error bound
\begin{equation} \label{eq:smm}
    |AB - \fl(AB)| \le n|A||B|\boldsymbol{\mathsf{u}} + O(\boldsymbol{\mathsf{u}}^2)
\end{equation}
for $A,B \in \mathbb{R}^{n \times n}$.
\begin{theorem}[Error analysis for our new algorithm]\label{thm:new}
Let $(A+i B)(C+i D) = F + i G$ with $F, G \in \mathbb{R}^{n \times n}$ and let $\widehat{F}_\N, \widehat{G}_\N$ be computed via \eqref{eq:New} in floating point arithmetic satisfying \eqref{eq:float}. 
Then
\begin{align}
|F - \widehat{F}_\N| &\le (n+7) \biggl(|A|+\frac{1}{\sqrt{3}}|B|\biggr)\biggl(|C|+\frac{1}{\sqrt{3}}|D|\biggr)\boldsymbol{\mathsf{u}} + \biggl(\frac{4}{3}n + 4\biggr)|B||D|\boldsymbol{\mathsf{u}} + O(\boldsymbol{\mathsf{u}}^2), \label{Gun:New1}\\
|G - \widehat{G}_\N| &\le \sqrt{3}(n+6)\biggl(|A|+\frac{1}{\sqrt{3}}|B|\biggr)\biggl(|C|+\frac{1}{\sqrt{3}}|D|\biggr)\boldsymbol{\mathsf{u}} + O(\boldsymbol{\mathsf{u}}^2), \label{Gun:New2}
\end{align}
where the inequality $\le$ and absolute value $\lvert\, \cdot\, \rvert$ both apply in a coordinatewise sense.
\end{theorem}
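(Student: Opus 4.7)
The plan is to run a standard propagation-of-error analysis through the four computational stages of \eqref{eq:New}: (i) form the matrix combinations $P = A+\tfrac{1}{\sqrt{3}}B$, $Q = C+\tfrac{1}{\sqrt{3}}D$, $R = A-\tfrac{1}{\sqrt{3}}B$, $S = C-\tfrac{1}{\sqrt{3}}D$; (ii) form the three conventional matrix products $M_1 = PQ$, $M_2 = RS$, $M_3 = BD$; (iii) scale by the constants $\tfrac{1}{2}$, $\tfrac{\sqrt{3}}{2}$, $\tfrac{8}{3}$; and (iv) combine the scaled products by addition/subtraction to yield $\widehat{F}_\N$ and $\widehat{G}_\N$. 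At each stage I track the componentwise error using only the standard model \eqref{eq:float} and the componentwise bound \eqref{eq:smm}, then collect everything at the end.

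For stage (i), two applications of \eqref{eq:float} per entry (one scalar multiply by $\pm\tfrac{1}{\sqrt{3}}$, one addition) give
\[
|\widehat{P} - P|,\ |\widehat{R}-R| \le 2\bigl(|A|+\tfrac{1}{\sqrt{3}}|B|\bigr)\mathsf{u} + O(\mathsf{u}^2),
\]
and analogously for $\widehat{Q}, \widehat{S}$. For stage (ii), I split
\[
\widehat{M}_1 - M_1 \;=\; \bigl[\fl(\widehat{P}\widehat{Q})-\widehat{P}\widehat{Q}\bigr] + (\widehat{P}-P)\widehat{Q} + P(\widehat{Q}-Q),
\]
apply \eqref{eq:smm} to the first bracket, and then use stage (i) together with the majorants $|P|\le|A|+\tfrac{1}{\sqrt{3}}|B|$ and $|Q|\le|C|+\tfrac{1}{\sqrt{3}}|D|$ (and similarly for $|R|,|S|$) to get
\[
|\widehat{M}_j - M_j| \le (n+c_j)\bigl(|A|+\tfrac{1}{\sqrt{3}}|B|\bigr)\bigl(|C|+\tfrac{1}{\sqrt{3}}|D|\bigr)\mathsf{u} + O(\mathsf{u}^2), \quad j=1,2,
\]
for a small constant $c_j$, and directly from \eqref{eq:smm} $|\widehat{M}_3 - M_3| \le n|B||D|\mathsf{u} + O(\mathsf{u}^2)$.

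For stages (iii)–(iv), each extra scalar multiplication and each addition contributes one further $\mathsf{u}$ times the current magnitude. For the real part $\widehat{F}_\N = \fl\bigl(\tfrac{1}{2}(\widehat{M}_1 + \widehat{M}_2 - \tfrac{8}{3}\widehat{M}_3)\bigr)$ this yields \eqref{Gun:New1}: the $|B||D|$ coefficient $\tfrac{4}{3}n + 4$ arises from $\tfrac{1}{2}\cdot\tfrac{8}{3}=\tfrac{4}{3}$ multiplying the $n|B||D|\mathsf{u}$ bound of $|\widehat{M}_3-M_3|$ plus the constant flops from forming $\tfrac{8}{3}\widehat{M}_3$, halving, and combining. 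For the imaginary part $\widehat{G}_\N = \fl\bigl(\tfrac{\sqrt{3}}{2}(\widehat{M}_1 - \widehat{M}_2)\bigr)$, there is no $|B||D|$ term and the $\sqrt{3}(n+6)$ factor emerges analogously, giving \eqref{Gun:New2}.

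The one place that demands care is constant bookkeeping: counting precisely how many $\mathsf{u}$'s arise from the scalar multiplies by $\pm\tfrac{1}{\sqrt{3}}, \tfrac{1}{2}, \tfrac{\sqrt{3}}{2}, \tfrac{8}{3}$ and the chain of additions, and uniformizing the two different $|P||Q|$- and $|R||S|$-type majorants into the single expression $(|A|+\tfrac{1}{\sqrt{3}}|B|)(|C|+\tfrac{1}{\sqrt{3}}|D|)$. There is no conceptual obstacle — the analysis parallels Higham's treatment of \eqref{eq:Gauss} in \cite{Higham,HighamBook} — but landing exactly the constants $(n+7)$, $(\tfrac{4}{3}n+4)$, $\sqrt{3}(n+6)$, rather than looser multiples, is the main technical task.
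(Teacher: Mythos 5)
Your plan follows essentially the same route as the paper's proof: a stage-by-stage componentwise error propagation through the scaled linear combinations, the three conventional matrix products via \eqref{eq:smm}, the constant scalings, and the final additions/subtractions, with the same evaluation schedule (real part halved last, imaginary part scaled by $\sqrt{3}/2$ last). The paper carries out the constant bookkeeping you correctly flag as the main task --- tracking each intermediate matrix $H_i$ and its error --- and lands exactly the constants $(n+7)$, $\tfrac{4}{3}(n+3)=\tfrac{4}{3}n+4$, and $\sqrt{3}(n+6)$ you anticipate, so there is no gap in your approach.
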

\begin{proof}
Following \cite{HighamBook}, we use the same letter $\delta$ to denote the error incurred in each step of our algorithm. So, for example,
\[
   \fl(B/\sqrt{3}) = B/\sqrt{3} + \delta B/\sqrt{3}.
\]
In the following we will define matrices $H_i$ and let $\widehat{H}_i$ be its computed value, $i =1,\dots,8$.

Let $H_1 \coloneqq A+B/\sqrt{3}$. Then
\begin{align*}
    \widehat{H}_1 &= \fl(A + B/\sqrt{3} + \delta B/\sqrt{3}) = (A + B/\sqrt{3} + \delta B/\sqrt{3}) (1 + \delta) \\
    &= A+B/\sqrt{3} + \delta (A + 2B/\sqrt{3}) + O(\boldsymbol{\mathsf{u}}^2) \\
    &= H_1 + 2 \Delta_1 + O(\boldsymbol{\mathsf{u}}^2), \qquad  |\Delta_1| \le  (|A| + |B|/\sqrt{3})\boldsymbol{\mathsf{u}}.
\end{align*}
Similarly $H_2 \coloneqq  C+D/\sqrt{3}$ satisfies
\[
    \widehat{H}_2 = H_2 + 2 \Delta_2 + O(\boldsymbol{\mathsf{u}}^2), \qquad |\Delta_2| \le (|C| + |D|/\sqrt{3})\boldsymbol{\mathsf{u}}.
\]

Let $H_3 \coloneqq  (A+B/\sqrt{3})(C+D/\sqrt{3})$.
By \eqref{eq:smm},
\begin{equation} \label{eq:stable:01}
    \widehat{H}_3 =  (A+B/\sqrt{3} + 2 \Delta_1)(C+D/\sqrt{3} + 2 \Delta_2) + n \Delta_3 + O(\boldsymbol{\mathsf{u}}^2)
\end{equation}
where
\begin{equation} \label{eq:stable:02}
\begin{aligned}
    |\Delta_3| & \le  |(A+B/\sqrt{3} + 2 \Delta_1)||(C+D/\sqrt{3} + 2 \Delta_2)|\boldsymbol{\mathsf{u}} \\
    & \le (|A|+|B|/\sqrt{3} + 2 |\Delta_1|)(|C|+|D|/\sqrt{3} +2|\Delta_2|)\boldsymbol{\mathsf{u}} \\
    & \le (|A|+|B|/\sqrt{3} + 2 \boldsymbol{\mathsf{u}} (|A| + |B|/\sqrt{3}))(|C|+|D|/\sqrt{3} +2\boldsymbol{\mathsf{u}} (|C| + |D|/\sqrt{3}))\boldsymbol{\mathsf{u}} \\
    & \le (|A|+|B|/\sqrt{3}) (|C|+|D|/\sqrt{3})\boldsymbol{\mathsf{u}} + O(\boldsymbol{\mathsf{u}}^2).
\end{aligned}
\end{equation}
By \eqref{eq:stable:01} and \eqref{eq:stable:02},
\begin{equation} \label{eq:stable:03}
\begin{aligned}
    \widehat{H}_3 & = (A+B/\sqrt{3})(C+D/\sqrt{3}) + 2\Delta_1(C+D/\sqrt{3}) \\
&\qquad + 2(A+B/\sqrt{3})\Delta_2 + n \Delta_3 + O(\boldsymbol{\mathsf{u}}^2) \\
    & = H_3 + (n+4)\Delta_4 + O(\boldsymbol{\mathsf{u}}^2)
\end{aligned}
\end{equation}
where
\[
    |\Delta_4| \le  (|A|+|B|/\sqrt{3}) (|C|+|D|/\sqrt{3})\boldsymbol{\mathsf{u}}.
\]
Similarly $H_4 \coloneqq  (A-B/\sqrt{3})(C-D/\sqrt{3})$ satisfies
\begin{equation} \label{eq:stable:04}
    \widehat{H}_4 = H_4 + (n+4)\Delta_5 + O(\boldsymbol{\mathsf{u}}^2)
\end{equation}
where
\[
    |\Delta_5| \le  (|A|+|B|/\sqrt{3}) (|C|+|D|/\sqrt{3})\boldsymbol{\mathsf{u}}.
\]

Let $H_5 \coloneqq  (A+B/\sqrt{3})(C+D/\sqrt{3}) + (A-B/\sqrt{3})(C-D/\sqrt{3})$.
By \eqref{eq:stable:03} and \eqref{eq:stable:04},
\begin{equation} \label{eq:stable:05}
\begin{aligned}
    \widehat{H}_5
    & = [H_3 + (n+4)\Delta_4 + H_4 + (n+4)\Delta_5] (1+\delta) + O(u^2) \\
    & = H_5 + (2n+10)\Delta_6 + O(\boldsymbol{\mathsf{u}}^2)
\end{aligned}
\end{equation}
where
\[
    |\Delta_6| \le \boldsymbol{\mathsf{u}} (|A|+|B|/\sqrt{3}) (|C|+|D|/\sqrt{3}).
\]

Let $H_6 \coloneqq  8/3 BD$. Then
\begin{equation} \label{eq:stable:06}
\begin{aligned}
    \widehat{H}_6 & = \fl(8/3 (BD + n \Delta_7)) + O(\boldsymbol{\mathsf{u}}^2) \\
    & = 8/3(BD + n \Delta_7) (1 + \delta) +O(\boldsymbol{\mathsf{u}}^2) \\
    & = H_6 + 8/3 (n+1) \Delta_8 + O(\boldsymbol{\mathsf{u}}^2)
\end{aligned}
\end{equation}
where 
\[
    |\Delta_7| \le |B||D|\boldsymbol{\mathsf{u}}, \qquad  |\Delta_8| \le |B||D|\boldsymbol{\mathsf{u}}.
\]

Let $H_7 \coloneqq  H_5 - H_6$. 
By \eqref{eq:stable:05} and \eqref{eq:stable:06},
\begin{align*}
    \widehat{H}_7
    & = [H_5 + (2n+10)\Delta_6
    - H_6 - 8/3 (n+1) \Delta_8] (1 + \delta) + O(\boldsymbol{\mathsf{u}}^2) \\
    & = H_7 + (2n+12) \Delta_9 + 8/3 (n+2) \Delta_{10} + O(\boldsymbol{\mathsf{u}}^2)
\end{align*}
where
\[
    |\Delta_9| \le  (|A|+|B|/\sqrt{3}) (|C|+|D|/\sqrt{3})\boldsymbol{\mathsf{u}}, \qquad  |\Delta_{10}| \le |B||D|\boldsymbol{\mathsf{u}}.
\]
Then
\begin{align*}
    \widehat{F}_\N & = (1+\delta)[H_7
     + (2n+12) \Delta_9 + 8/3 (n+2) \Delta_{10}]/2 + O(\boldsymbol{\mathsf{u}}^2) \\
    & = F + (n+7) \Delta_{11} + 4/3(n+3) \Delta_{12} + O(\boldsymbol{\mathsf{u}}^2)
\end{align*}
where
\[
    |\Delta_{11}| \le  (|A|+|B|/\sqrt{3}) (|C|+|D|/\sqrt{3}) \boldsymbol{\mathsf{u}}, \qquad  |\Delta_{12}| \le |B||D|\boldsymbol{\mathsf{u}},
\]
and from which we obtain \eqref{Gun:New1}.

Let $H_8 \coloneqq  (A+B/\sqrt{3})(C+D/\sqrt{3}) - (A-B/\sqrt{3})(C-D/\sqrt{3})$.
Similar to \eqref{eq:stable:05}, we have
\[
    \widehat{H}_8 = H_8 - (2n+10)\Delta_{13} + O(\boldsymbol{\mathsf{u}}^2)
\]
where
\[
    |\Delta_{13}| \le  (|A|+|B|/\sqrt{3}) (|C|+|D|/\sqrt{3})\boldsymbol{\mathsf{u}}.
\]
Then
\begin{align*}
    \widehat{G}_\N & = \sqrt{3}/2 [H_8 - (2n+10)\Delta_{13}] (1+\delta) + O(\boldsymbol{\mathsf{u}}^2) \\
    & = G + \sqrt{3}(n+6) \Delta_{14} + O(\boldsymbol{\mathsf{u}}^2)
\end{align*}
where
\[
    |\Delta_{14}| \le  (|A|+|B|/\sqrt{3}) (|C|+|D|/\sqrt{3})\boldsymbol{\mathsf{u}},
\]
from which we obtain \eqref{Gun:New2}.
\end{proof}

If we compute the matrices $F,G$ in Theorem~\ref{thm:new} using Gauss's algorithm \eqref{eq:Gauss} with floating point arithmetic and let the results be  $\widehat{F}_\G$ and $\widehat{G}_\G$, then the corresponding error bounds \cite{Higham,HighamBook} are
\begin{equation} \label{Gun:Gauss}
\begin{aligned}
    |F - \widehat{F}_\G| &\le (n + 1)( |A||C| + |B||D|)\boldsymbol{\mathsf{u}} + O(\boldsymbol{\mathsf{u}}^2),\\
    |G - \widehat{G}_\G| &\le (n + 4)[(|A| + |B|) (|C|+|D|) + |A||C| + |B||D|]\boldsymbol{\mathsf{u}} + O(\boldsymbol{\mathsf{u}}^2).
\end{aligned}
\end{equation}
When $n \to \infty$, we have $n+c \approx n$ for any constant $c$. Hence the errors in \eqref{Gun:New1} and \eqref{Gun:New2}  are dominated by
\begin{align*}
|F - \widehat{F}_\N| &\sim n \biggl[|A||C| + \frac{5}{3} |B||D| + \frac{1}{\sqrt{3}} |A||D| + \frac{1}{\sqrt{3}}|B||C|\biggr]\boldsymbol{\mathsf{u}},\\
|G - \widehat{G}_\N| &\sim n \biggl[\sqrt{3}|A||C| + |B||C| + |A||D| + \frac{1}{\sqrt{3}} |B||D| \biggr] \boldsymbol{\mathsf{u}},
\end{align*}
whereas those in \eqref{Gun:Gauss} are dominated by
\begin{align*}
|F - \widehat{F}_\G| &\sim  n(|A||C| + |B||D| )\boldsymbol{\mathsf{u}}, \\
 |G - \widehat{G}_\G| &\sim  n(2|A||C| + 2|B||D| +|A||D| + |B||C|)\boldsymbol{\mathsf{u}} .
\end{align*}
For easy comparison suppose the magnitudes of the entries in $A,B,C,D$  are all approximately $\theta$, then these reduce to
\begin{equation}\label{eq:compare}
\begin{alignedat}{2}
|F - \widehat{F}_\N| &\sim 3.8 n^2 \theta^2,\qquad & |G - \widehat{G}_\N|  &\sim  4.3 n^2 \theta^2,\\
|F - \widehat{F}_\G| &\sim  2n^2 \theta^2, & |G - \widehat{G}_\G|  &\sim  6 n^2 \theta^2.
\end{alignedat}
\end{equation}
So Gauss's algorithm gives an imaginary part that is three times less accurate than its real part. Note the the imaginary part of Gauss's algorithm accounts for all its computational savings; the real part is just the regular algorithm.  On the other hand, our algorithm balances the accuracy of both the real and imaginary parts by spreading out the computational savings across both parts.

To quantify this, we use the \emph{max norm}. For a complex matrix $A + i B \in \mathbb{C}^{n \times n}$, this is
\begin{equation}\label{eq:maxnorm}
    \lVert A + i B \rVert_{\max} \coloneqq \max \{\lvert a_{ij} \rvert, \lvert b_{ij} \rvert : i,j = 1,\dots,n\}.
\end{equation}
The max norm differs from the usual matrix $\infty$-norm given by maximum row sum used in \cite{Higham,HighamBook}. We favor the max norm as it is the strictest measure of numerical accuracy --- a small max norm error implies that each entry is accurate as opposed to accurate on average.

If we denote the matrices resulting from Gauss's algorithm and our new algorithm by
\[
 \widehat{E}_\G \coloneqq \widehat{F}_\G + i  \widehat{G}_\G, \qquad  \widehat{E}_\N \coloneqq  \widehat{F}_\N + i  \widehat{G}_\N
\]
respectively, we expect $\lVert E - \widehat{E}_\N\rVert_{\max}$ to be smaller than $\lVert E - \widehat{E}_\G\rVert_{\max}$. The extensive experiments in Section~\ref{sec:expr} will attest to this.

\subsection{Derivation of our algorithm}

It is perhaps instructive to include a description of how one may derive the algorithm in \eqref{eq:ournew} by minimizing growth factor. Observe that Gauss's algorithm \eqref{eq:gauss} includes the term $(a+b)(c+d)$, which adds $2$ to its growth factor. We seek to reduce the growth factor by replacing it with $(a+rb)(c+rd)$ for some  shrinkage $r \in (0,1)$, which leads to a family of algorithms parameterized by $r$:
\begin{multline*}
(a + bi)(c+ di) = \frac{1}{2} [
(a + rb)
(c + rd)
+(a - rb)
(c -  rd)
-(2r^2+2)bd
]
\\
+
\frac{i}{2r} [
(a + rb)
(c + rd)
-(a - rb)
(c - rd)
].
\end{multline*}
Let $g(r)$ denote the growth factor. A simple calculation shows that
\[
    g(r) = \frac{1}{r} (1+r^2)^{3/2} + r^2 + 1,
\]
which has a minimum of $4$ attained at $r = 1/\sqrt{3}$, giving us \eqref{eq:ournew}. Note that \eqref{eq:ournew} is not unique; another algorithm with growth factor $4$ is given by
\begin{multline*}
(a + bi)(c+ di) =
\frac{\sqrt{3}}{2} \biggl[
\biggl(a + \frac{1}{\sqrt{3}}b\biggr)
\biggl(\frac{1}{\sqrt{3}}c - d\biggr)
+\biggl(a - \frac{1}{\sqrt{3}}b\biggr)
\biggl(\frac{1}{\sqrt{3}}c + d\biggr)
\biggr]
\\
+
 \frac{i}{2} \biggl[
\biggl(a - \frac{1}{\sqrt{3}}b\biggr)
\biggl(\frac{1}{\sqrt{3}}c + d\biggr)
-\biggl(a + \frac{1}{\sqrt{3}}b\biggr)
\biggl(\frac{1}{\sqrt{3}}c - d\biggr)
+\frac{8}{3}bc
\biggr],
\end{multline*}
which may be obtained from  \eqref{eq:ournew} by substituting $c = di$ and $d = -ci$.

\section{Experiments for new complex matrix multiplication algorithm}\label{sec:expr}

The goal of this section is to provide numerical evidence to show that our new algorithm \eqref{eq:New} for  complex matrix multiplication is 
\begin{itemize}
    \item nearly as stable as the regular algorithm \eqref{eq:Regular}, and
    \item nearly as fast as Gauss's algorithm \eqref{eq:Gauss}.
\end{itemize}
We begin with routine experiments comparing the three algorithms \eqref{eq:Regular}, \eqref{eq:Gauss}, \eqref{eq:New} on random matrices, and move on to three actual applications: matrix polynomial evaluations, unitary transformations, and the increasingly popular complex-valued neural networks. The results, we think, show that our new algorithm can be a realistic replacment for Gauss's algorithm in engineering applications.

\subsection{Speed of the algorithms}

We generate random $A+i B, C + i D \in \mathbb{C}^{n \times n}$ with entries of $A,B,C,D$ drawn uniformly in $[-1,1]$; the results with standard normal are similar and omitted. We increase $n$ from $2100$ to $7000$ in steps of $100$. The product $(A+i B)(C+i D)$ is computed numerically with the regular algorithm \eqref{eq:Regular}, Gauss's algorithm \eqref{eq:Gauss}, and our new algorithm \eqref{eq:New}. For each $n$, we generate ten different matrices and record the average time taken for each algorithm and plot these in Figure~\ref{fig:Speed}, with wall time (in seconds) for vertical axis and $\log_{10}(n)$ for horizontal axis. The time taken by \textsc{Matlab}'s internal function for complex matrix multiplication is virtually indistinguishable from that of the regular algorithm and therefore omitted.

Consistent with the predictions of bilinear complexity, our new algorithm has roughly the same computation time as Gauss's algorithm, at roughly $3/4$ the time taken by the regular algorithm. We will perform more speed experiments in conjunction with our accuracy experiments in Section~\ref{exp:accuracy:general}.
\begin{figure}[ht]
    \centering
    \includegraphics[scale = 0.5]{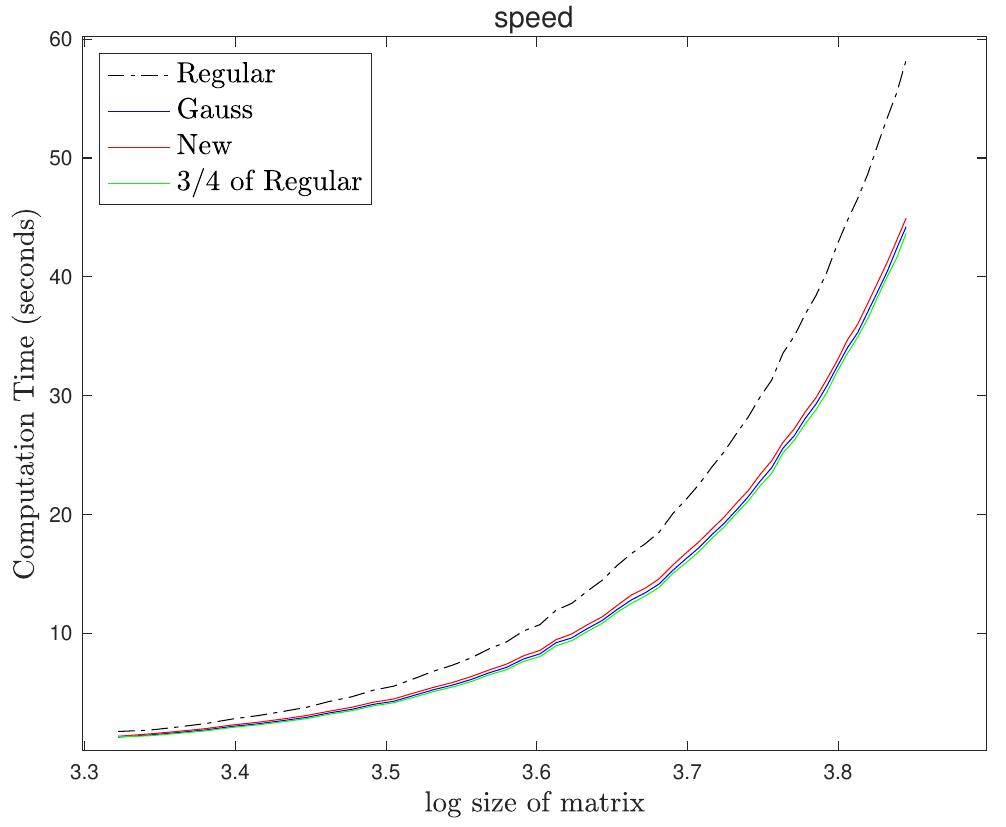}
    \caption{Speed of the three algorithms for complex matrix multiplication.}
    \label{fig:Speed}
\end{figure}

\subsection{Accuracy of the algorithms} \label{exp:accuracy:general}

\begin{figure}[ht]
    \centering
	 \includegraphics[width = 0.49\textwidth]{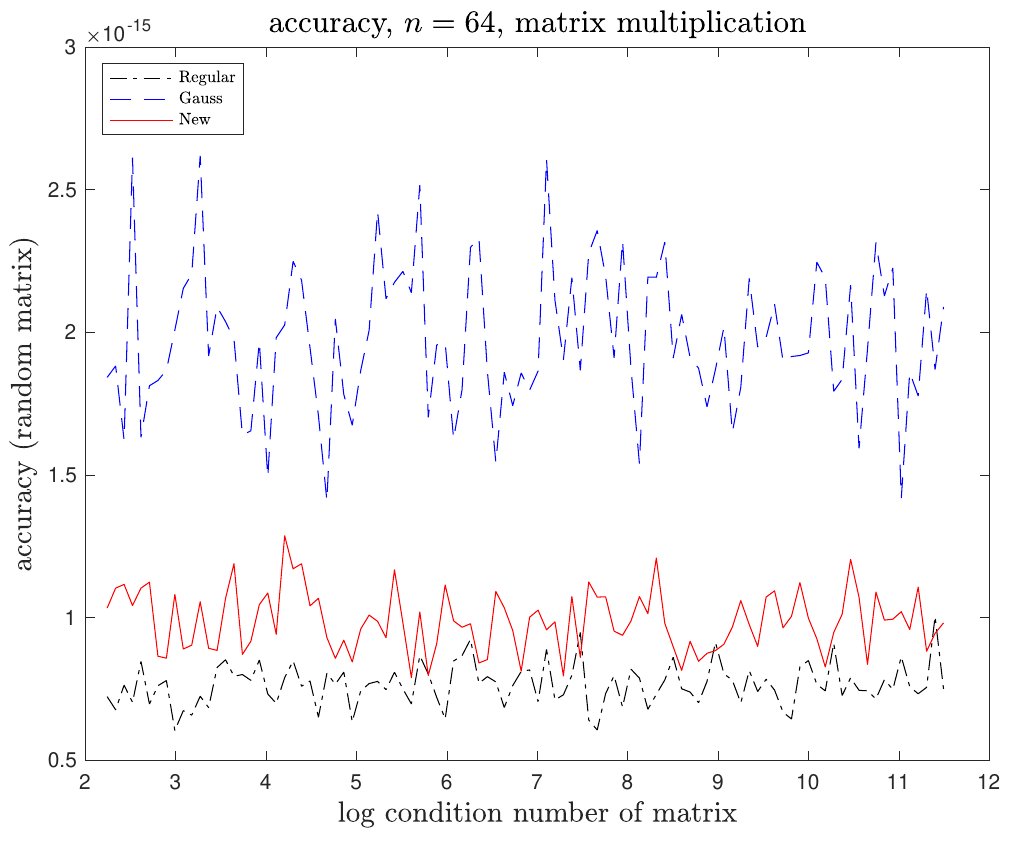}
         \includegraphics[width = 0.49\textwidth]{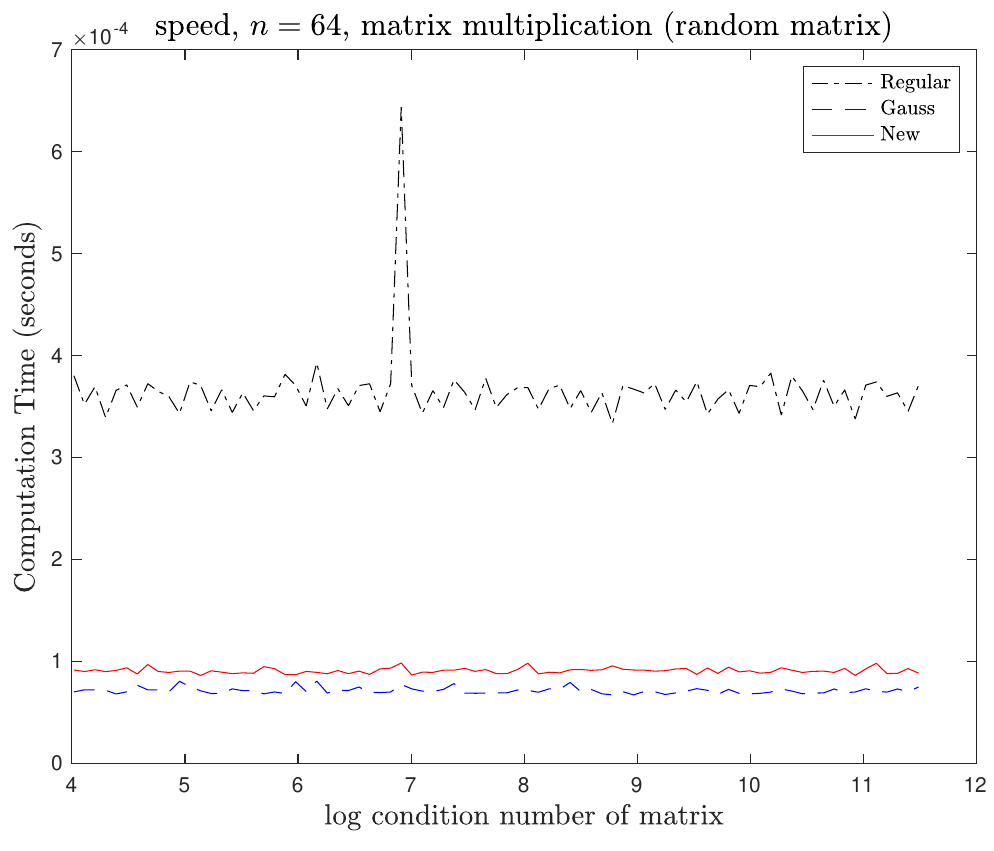}
         \includegraphics[width = 0.49\textwidth]{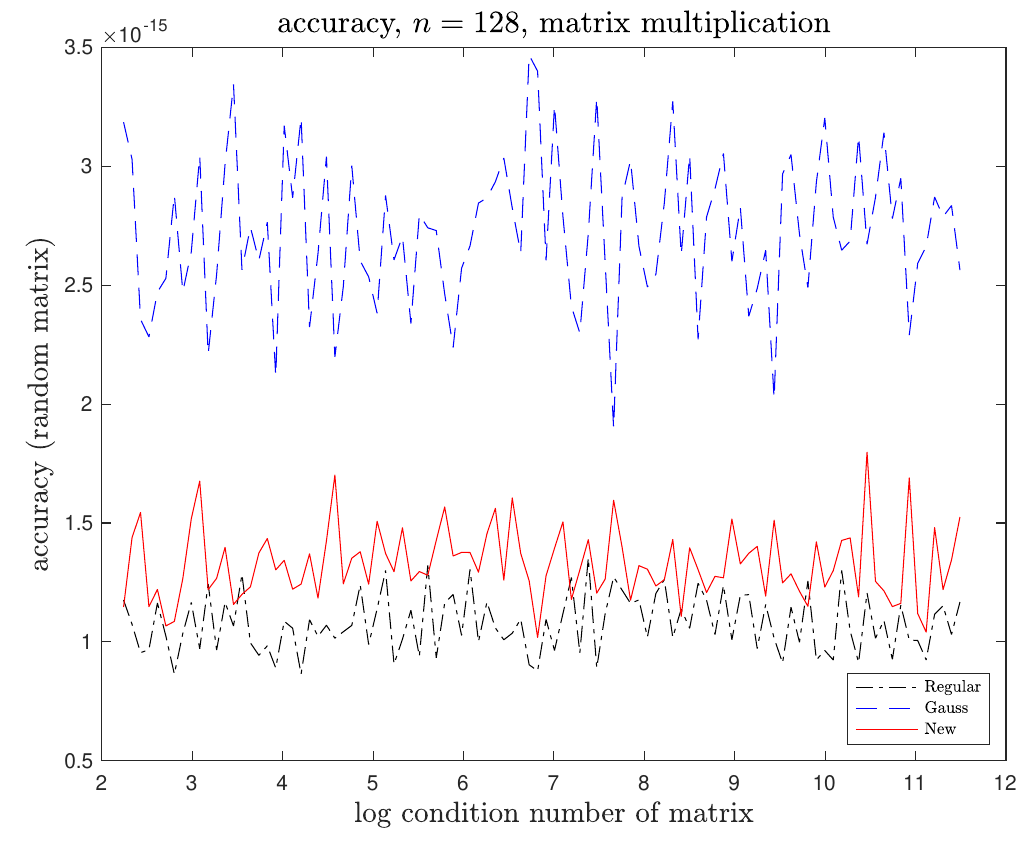}
         \includegraphics[width = 0.49\textwidth]{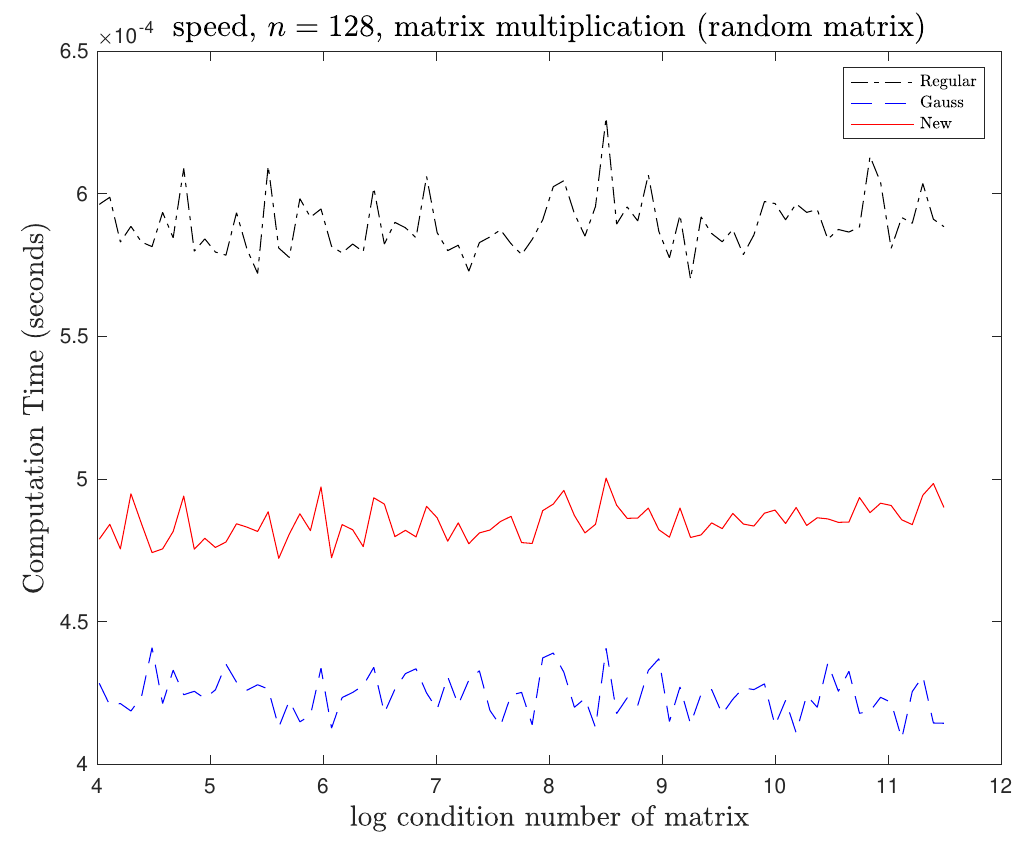}
         \includegraphics[width = 0.49\textwidth]{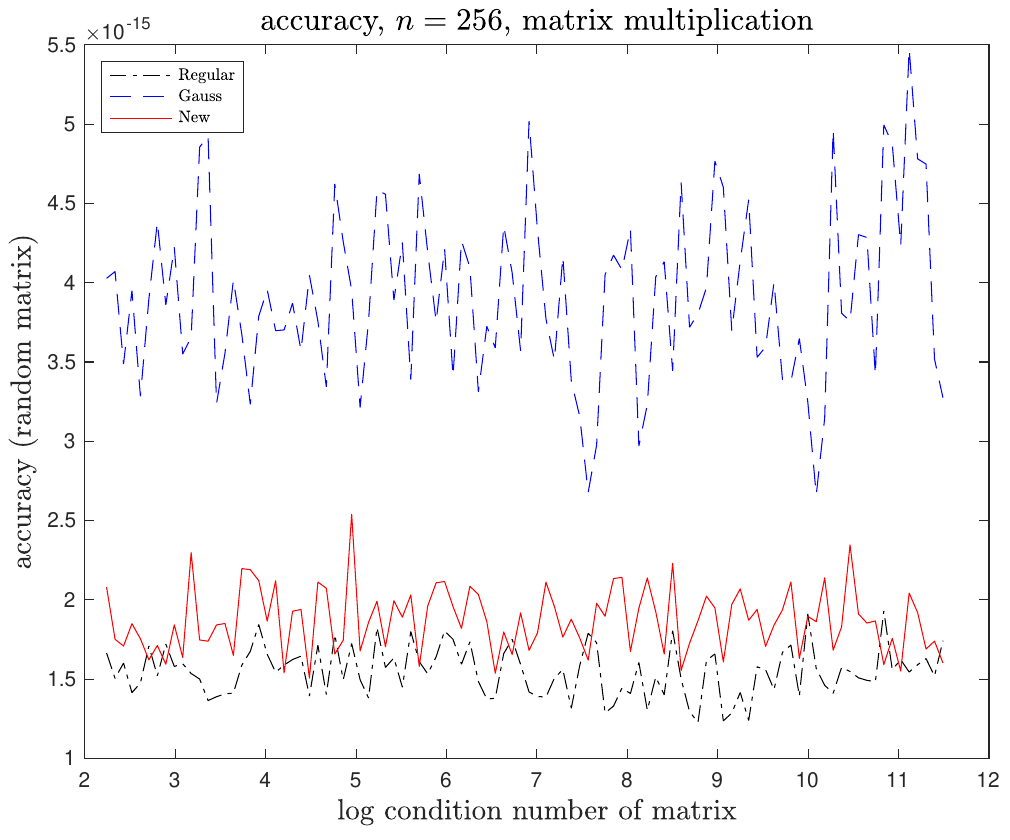}
         \includegraphics[width = 0.49\textwidth]{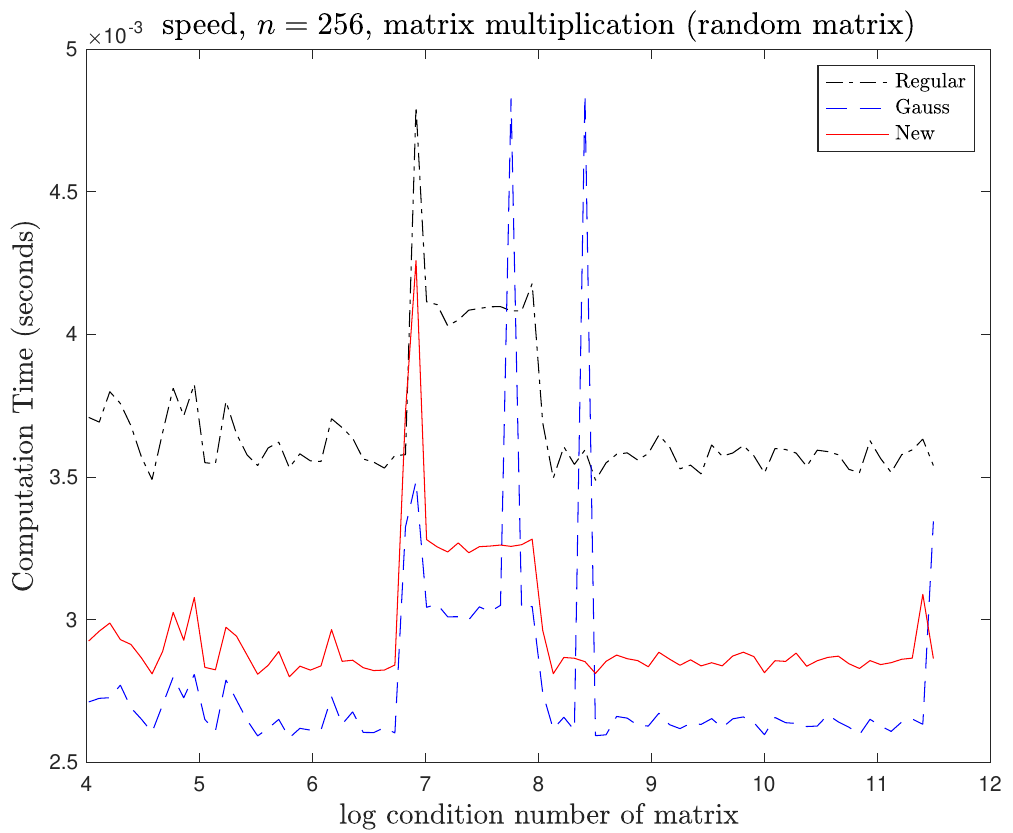}
\caption{Accuracy and speed of algorithms for complex matrix multiplication.}
    \label{fig:Accuracy}
\end{figure}

We generate random $A+i B, C + i D \in \mathbb{C}^{n \times n}$ with $n = 64,128,256$ and with condition numbers ranging from $174$ to $3 \times 10^{11}$.  We use the spectral condition number $\kappa_2(X)$, i.e., ratio of largest to smallest singular values of $X$, throughout this article.  It is desirable to limit ourselves to matrices over Gaussian rationals, i.e., $\mathbb{Q} + \mathbb{Q}i$, as we will need to compute the exact values of their products later.

The way we generate such a matrix requires some elaboration. For an $X \in \mathbb{Z}^{n \times n}$ with a specified $\kappa_2(X) = \kappa \in \mathbb{Z}$. We form a diagonal $\Lambda \in \mathbb{R}^{n \times n}$ whose diagonal entries are $1$ and $\kappa$ toegether with $n-2$ other random integers between $1$ and $\kappa - 1$. We then form $X = H \Lambda H^\tp$ with a random Hadamard matrix $H \in \mathbb{Z}^{n \times n}$. If $A$ and $B$ are generated in this manner, then they are dense matrices (important as we do not want sparsity to unduly influence arithmetic costs) and $\kappa_2(A+iB) = \kappa_2(A) = \kappa_2(B) = \kappa$ as $(\kappa + \kappa i)/(1+i) = \kappa$.

We compute the exact value of $(A+i B)(C + i D)$ symbolically with \textsc{Matlab}'s symbolic toolbox. Given our relatively modest computational resources, this is the bottleneck for our experiments as this step becomes prohibitively expensive when $n > 256$. In generating the $n = 256$ plots in Figure~\ref{fig:Accuracy}, this step alone took 40 hours on our University's Research Computing Center servers.

For each pair of complex matrices $A+i B$ and $C + i D$, we compute their product $\widehat{E}$ using each of the three algorithms \eqref{eq:Regular}, \eqref{eq:Gauss}, \eqref{eq:New}, and compare them against the exact result $E$ via the max norm relative error
\[
    \frac{\lVert E - \widehat{E}\rVert_{\max}}{\lVert A +iB \rVert_{\max} \lVert C + i D \rVert_{\max}}.
\]
As discussed in \cite{Higham,HighamBook}, it is natural to measure error in matrix multiplication relative to the norms of the input matrices. We use the max norm in \eqref{eq:maxnorm} to better capture entrywise accuracy. 

The results are plotted in Figure~\ref{fig:Accuracy}: speed plots have wall time in seconds on the vertical axes; accuracy plots have relative error on the vertical axes; all plots have $\log_{10}(\kappa)$ on the horizontal axes. We repeat each experiment ten times: every value on these plots comes from averaging across the results of ten pairs of random matrices with the same  condition number.

Observations from Figure~\ref{fig:Accuracy}: The accuracy of our new algorithm is much higher than that of Gauss's algorithm and only slightly worse than that of the regular algorithm. Gauss's algorithm also shows a great deal more fluctuation across varying condition numbers than either our new algorithm or the regular one. When it comes to speed, our algorithm is closer to that of Gauss's than the regular algorithm. These accuracy results attest to Theorem~\ref{thm:new} and the discussions around \eqref{eq:compare}.

The relative errors and wall times for \textsc{Matlab}'s internal function for complex matrix multiplication are virtually indistinguishable from those of the regular algorithm (that we implemented ourselves) and thus omitted. In the next three sections, we will compare the accuracy and speed of the three complex matrix multiplication algorithms in more realstic scenarios.

\subsection{Matrix polynomial evaluations}\label{sec:poly}

We evaluate a polynomial $p(x) = \sum_{k = 0}^d a_k x^k$ with coefficients $a_0,\dots,a_k \in \mathbb{R}$ at a $X \in \mathbb{C}^{n \times n}$. This is a problem that occurs in many tasks involving matrix functions \cite{HighamBook,matrix_function}. We limit ourselves to real coefficients as this is by far most common scenario \cite{matrix_function}; but the complex coefficients case simply reduces to evaluating two real polynomials $\Re p(x)$ and $\Im p(x)$. The celebrated Horner's rule \cite[Algorithm~4.3]{matrix_function}, as shown in Algorithm~\ref{alg:poly}, reduces the problem to one of repeated matrix multiplications.
\begin{algorithm}[htb]
\caption{Compute $p(X)$ via Horner's rule}
\label{alg:poly}
\begin{algorithmic}[1]
\renewcommand{\algorithmicrequire}{ \textbf{Input}}
\Require
$a_0,a_1,\dots,a_d \in \mathbb{R}$, $X \in \mathbb{C}^{n \times n}$
\renewcommand{\algorithmicensure}{ \textbf{Output}}
\Ensure
$a_0 I + a_1 X + \dots + a_d X^d$
\State $P = X$;
\State $S = a_0 I + a_1 X$;
\For{$k = 2:d$}
    \State $P = P X$;
    \State $S = S + a_k P$;
\EndFor
\State return $S$;
\end{algorithmic}
\end{algorithm}

We generate random matrices $X \in \mathbb{C}^{256 \times 256}$ with condition numbers from $2^{34}$ to $2^{53}$ as described in Section~\ref{exp:accuracy:general}. We set $d = 5$ and choose random $b_0,\dots,b_5\in (0,1)$ uniformly. We then evaluate $p(X)$ using Algorithm~\ref{alg:poly}, with Step~4 computed via \eqref{eq:Regular}, \eqref{eq:Gauss}, and \eqref{eq:New}.
\begin{figure}[htb]
    \centering
         \includegraphics[width = 0.49\textwidth]{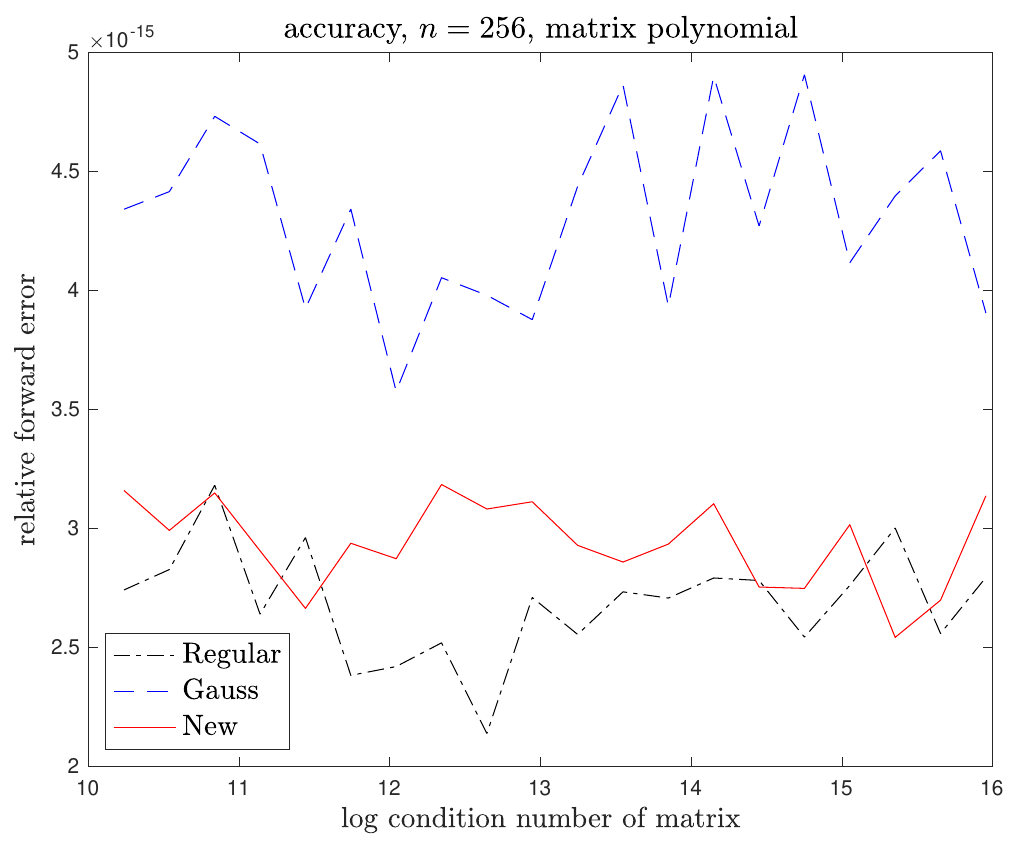}
         \includegraphics[width = 0.49\textwidth]{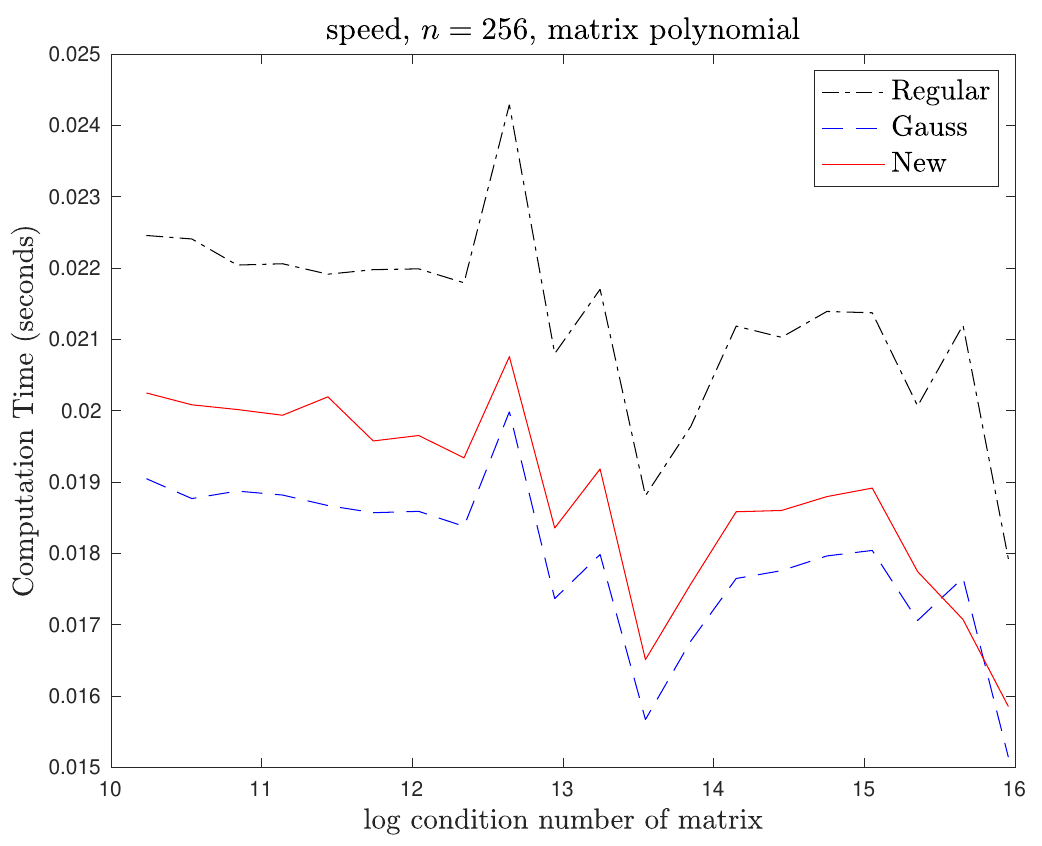}
    \caption{The three algorithms applied to matrix polynomial evaluations.}
    \label{fig:Matrix Polynomial}
\end{figure}
We measure accuracy in terms of the max norm relative forward error
\[
    \frac{\lVert p(X) - \widehat{p}(X)\rVert_{\max}}{\lVert p(X) \rVert_{\max}},
\]
using \textsc{Matlab} symbolic toolbox for the exact value of $p(X)$. The results presented in Figure~\ref{fig:Matrix Polynomial} again show that our new algorithm is nearly as stable as the regular algorithm and nearly as fast as Gauss's algorithm. While our accuracy tests are again limited by our capacity for symbolic computation ($n = 256$ is fine, $n = 512$ is beyond reach), our speed tests can go far beyond (to around $n =4096$), and they show a profile much like Figure~\ref{fig:Speed}.

\subsection{Unitary transforms}\label{sec:uni}

Given a unitary matrix $U \in \mathbb{C}^{n \times n}$ and a complex matrix $X \in \mathbb{C}^{n \times n}$, it may come as a surprise to the reader that unless $U$ happens to be some special transforms like FFT, DCT, DWT, etc, or has already been factored into a product of Householder or Givens matrices,  there is no known special algorithm for forming $UX$ that would take advantage of the unitarity of $U$. Nevertheless, such unitary matrices with no additional special structure are not uncommon. For instance, the matrix $U$ could come from polar decompositions or matrix sign functions \cite{polar_ref2,polar_sylvester_higham,polar_ref3}, and computed via iterative methods \cite{polar_ref2,polar_sylvester_higham,polar_ref3} and thus not in Householder- or Givens-factored form. Here we will explore the use of algorithms \eqref{eq:Regular}, \eqref{eq:Gauss}, \eqref{eq:New} for unitary transforms $X \mapsto UX$.

We generate the unitary matrix $U \in \mathbb{C}^{256 \times 256}$  by QR factoring complex random matrices with entries in $\mathcal{U}[0,1] + \mathcal{U}[0,1]i$. Note that a unitary matrix is always perfectly conditioned. The matrix $X \in \mathbb{C}^{256 \times 256}$ is generated randomly with condition numbers from $2^{34}$ to $2^{53}$ as in Section~\ref{sec:poly}. We compute the exact value $E \coloneqq U X$ symbolically as before and measure the accuracy of our computed value $\widehat{E}$ by
\[
    \frac{\lVert E - \widehat{E}\rVert_{\max}}{\lVert U \rVert_{\max} \lVert X \rVert_{\max}}.
\]
The results, presented in Figure~\ref{fig:Unitary Transformations}, allow us to draw the same conclusion as in the Section~\ref{sec:poly}. Further speed tests up to $n = 4096$ again show a profile much like Figure~\ref{fig:Speed}.

\begin{figure}[ht]
    \centering
         \includegraphics[width = 0.49\textwidth]{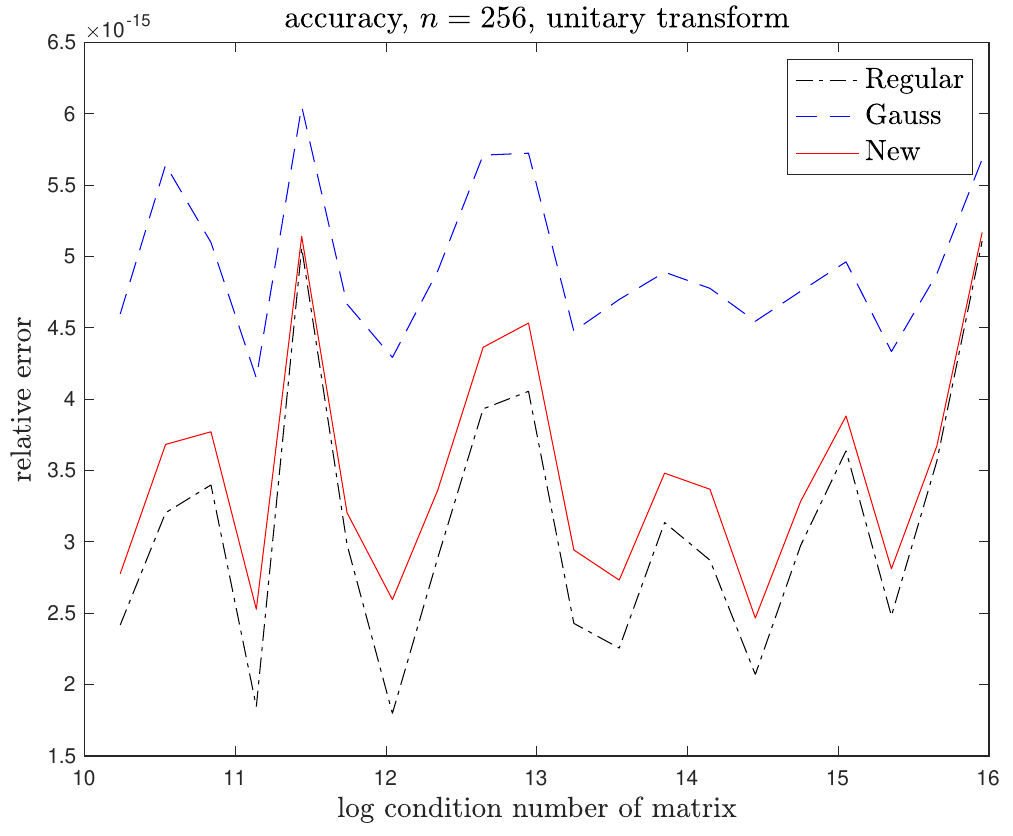}
         \includegraphics[width = 0.49\textwidth]{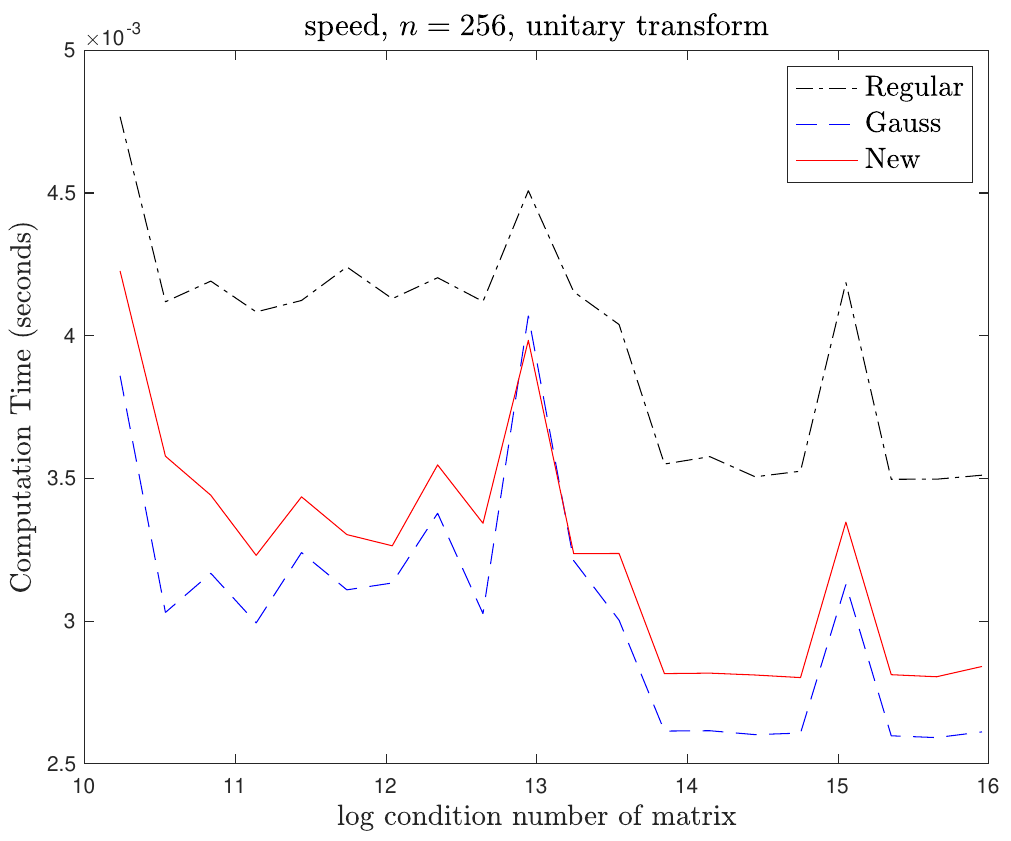}
    \caption{The three algorithms applied to unitary transforms.}
    \label{fig:Unitary Transformations}
\end{figure}

\subsection{Complex-valued neural networks}

A complex-valued neural networks is simply a neural network with complex-valued weights and is activated by a complex function. It has become increasingly important and is widely used in signal processing and computer vision \cite{complexNN_1,complexNN_6,complexNN_3,complexNN_4,complexNN_5,complexNN_2}. For simplicity, we consider a $d$-layer constant width version $f :\mathbb{C}^n \to \mathbb{C}^n$ given by
\[
f(W_1, \dots, W_d, \sigma)(x) \coloneqq W_d \sigma (W_{d-1} \sigma( \cdots W_2 \sigma(W_1 x) \cdots )),
\]
with weight matrices $W_1, \dots, W_d \in \mathbb{C}^{n \times n}$ and activation function $\sigma : \mathbb{C} \to \mathbb{C}$ applied coordinatewise on $\mathbb{C}^n$, as depicted in Figure~\ref{fig:neural network}.
\begin{figure}[ht]
    \centering
    \begin{neuralnetwork}[height=4,layerspacing=20mm]
        \newcommand{\x}[2]{$x_#2$}
        \newcommand{\y}[2]{$y_#2$}
        \newcommand{\hfirst}[2]{ $h^{\!(1)}_#2$}
        \newcommand{\hsecond}[2]{ $h^{\!(2)}_#2$}
        \newcommand{\hthird}[2]{ $h^{\!(3)}_#2$}
        \newcommand{\hforth}[2]{ $h^{\!(4)}_#2$}
        \newcommand{\hfifth}[2]{ $h^{\!(5)}_#2$}
        \inputlayer[count=4, bias=false, title=input\\layer, text=\x]
        \hiddenlayer[count=4, bias=false, title=hidden\\layer 1, text=\hfirst] \linklayers
        \hiddenlayer[count=4, bias=false, title=hidden\\layer 2, text=\hsecond] \linklayers
        \hiddenlayer[count=4, bias=false, title=hidden\\layer 3, text=\hthird] \linklayers
        \hiddenlayer[count=4, bias=false, title=hidden\\layer 4, text=\hforth] \linklayers
        \hiddenlayer[count=4, bias=false, title=hidden\\layer 5, text=\hfifth] \linklayers
        \outputlayer[count=4, title=output\\layer, text=\y] \linklayers
    \end{neuralnetwork}
    \caption{A constant width neural network with input dimension $n = 4$ and depth $d = 6$. The arrows between adjacent layers are weighted with values in the weight matrices. $h^{(k)} \in \mathbb{R}^n$ denotes the output of the $k$th layer.}
\label{fig:neural network}
\end{figure}

Complex matrix multiplications are indispensable when we \emph{train} (i.e., fit with data in order to determine the weights $W_1,\dots,W_d$) such a neural network through backpropagation, or when we \emph{evaluate} it on multiple inputs $x_1,\dots, x_m \in \mathbb{C}^n$ to make new predictions. Here we will compare the performance of the three algorithms  \eqref{eq:Regular}, \eqref{eq:Gauss}, \eqref{eq:New} for the latter task as it allows for easier control of the condition numbers of $W_1,\dots,W_d$.

For concreteness, we choose a depth of $d = 6$ and use the complex ReLU activation \cite{complexNN_6,complexNN_5}
\[
    \sigma(a+b i) \coloneqq \max(a,0) + \max(b,0) i.
\]
We generate random weight matrices $W_1, \dots, W_6 \in \mathbb{C}^{n \times n}$ with $n = 64$ and $128$, and with condition numbers ranging from $2^{34}$ to $2^{53}$. We also generate random inputs $X = [x_1,\dots,x_m] \in \mathbb{C}^{n \times m}$ with entries drawn from $\mathcal{U}[-\frac12,\frac12] +\mathcal{U}[-\frac12,\frac12]i$, and with $(m,n) = (25,64)$ or $(50,128)$. The task is then to evaluate
\[
    E \coloneqq f(W_1, \dots, W_d, \sigma)(X) \coloneqq W_d \sigma (W_{d-1} \sigma( \cdots W_2 \sigma(W_1 X) \cdots )).
\]
Again we compute its exact value $E$ symbolically, apply the three algorithms to obtain $\widehat{E}$ numerically, and measure accuracy in terms of the relative forward error
\[
    \frac{\lVert E-\widehat{E}\rVert_{\max}}{\lVert E \rVert_{\max}}.
\]
The results, shown in Figure~\ref{fig:Complex Neural Networks}, are fully consistent with those in Sections~\ref{sec:poly} and \ref{sec:uni}.

\begin{figure}[ht]
    \centering
         \includegraphics[width = 0.49\textwidth]{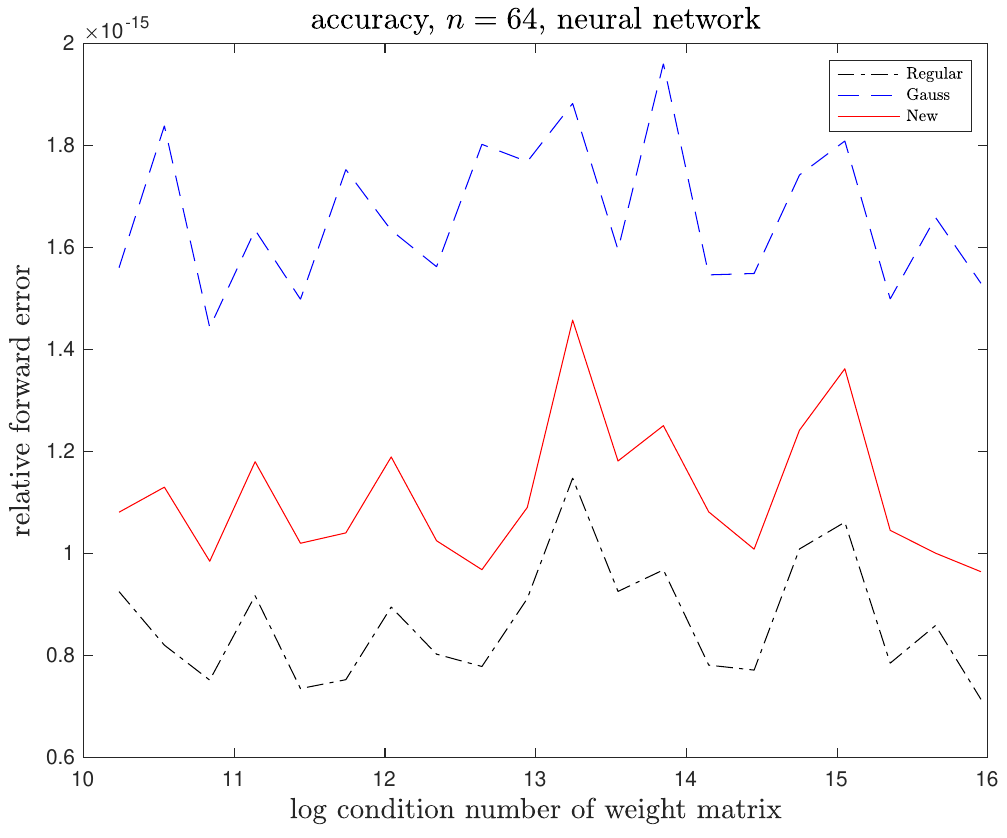}
         \includegraphics[width = 0.49\textwidth]{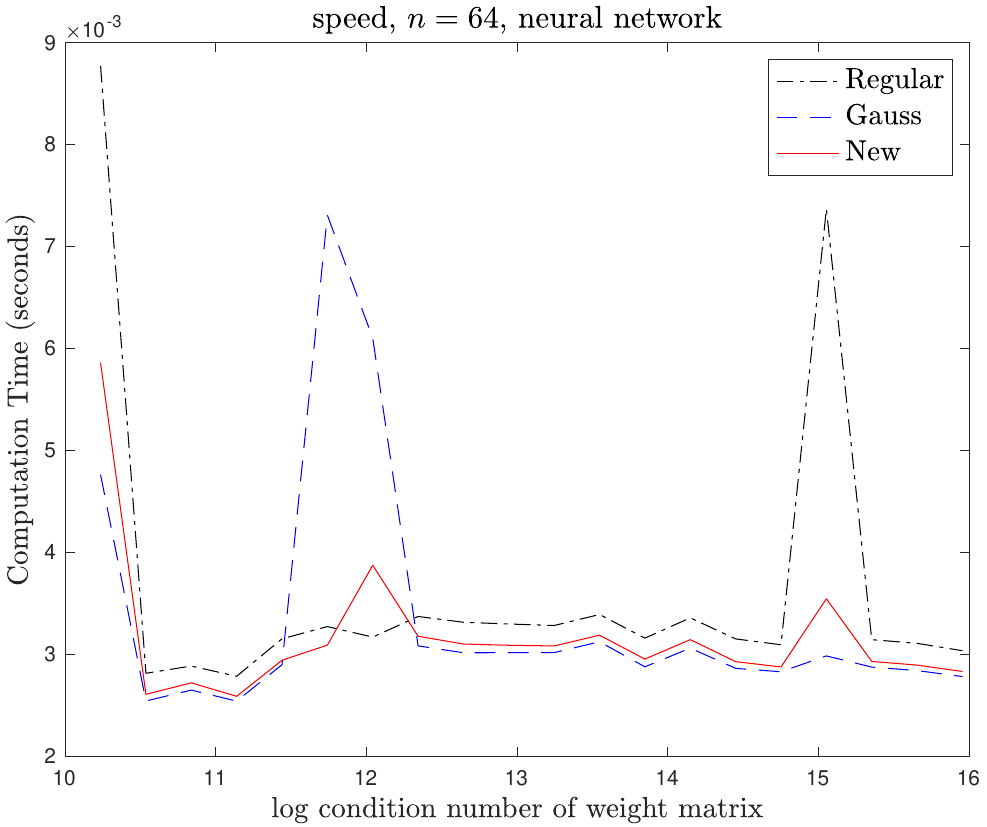}
         \includegraphics[width = 0.49\textwidth]{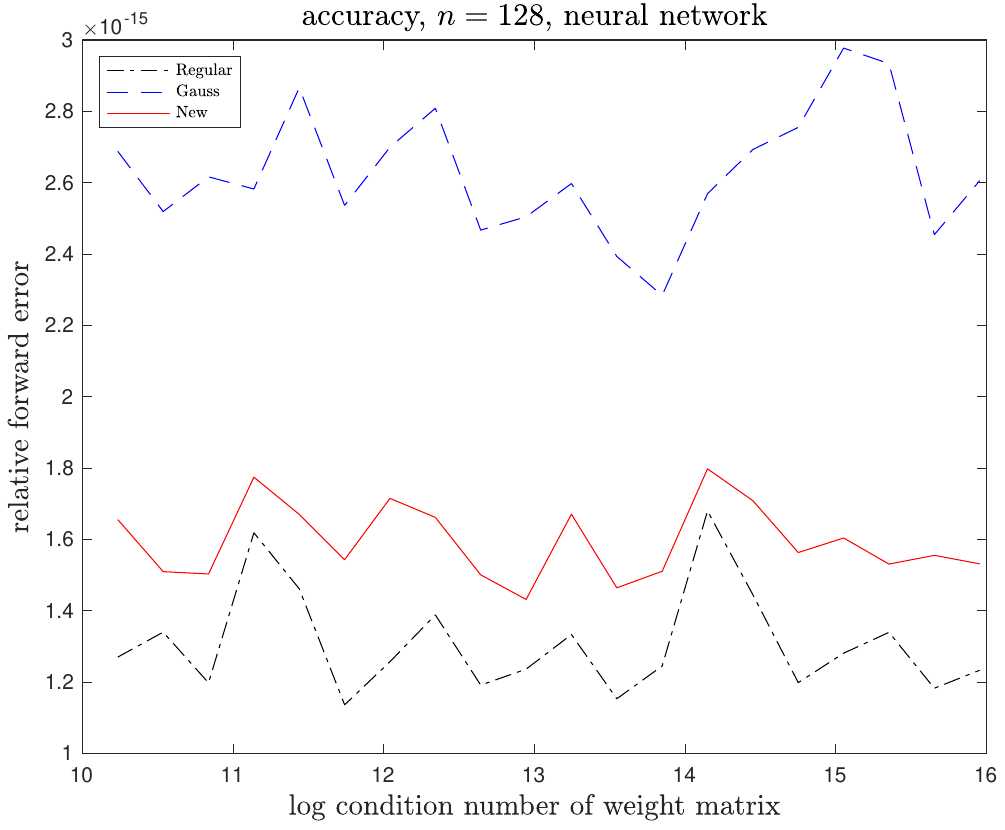}
         \includegraphics[width = 0.49\textwidth]{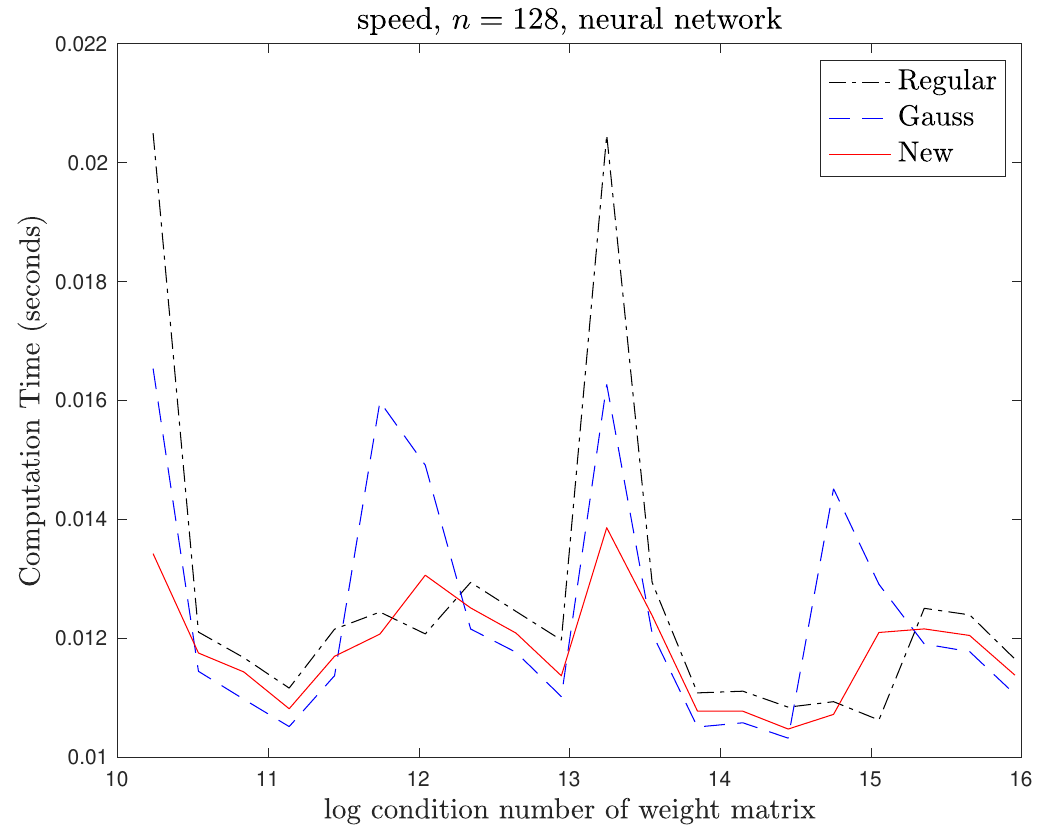}
    \caption{The three algorithms applied to $6$-layer complex neural networks with complex ReLU activation and widths $64$ and $128$.}
    \label{fig:Complex Neural Networks}
\end{figure}

\

\section{Conclusion}

The notion of bilinear complexity started by Strassen has been a great motivator for more than five decades of exciting developments in numerical linear algebra. Its success illustrates the adage that ``less is more''. Bilinear complexity does not capture every operation that underlies the speed of an algorithm; but by focusing on a single operation (variable multiplications) and disregarding the rest (e.g., scalar multiplications, additions), it allows speed to be measured by the number of terms in a decomposition of a $3$-tensor and the fastest algorithm to be given by a rank decomposition. This opens a door to other areas of mathematics like algebraic geometry where such decompositions are studied independent of their computational relevance.

We hope the notion of bilinear stability proposed in this article would do for the study of numerical stability what bilinear complexity did for the study of time complexity. By focusing on a single factor (growth) and disregarding other factors (e.g., cancellation errors) that play a role in numerical stability, it allows stability to be measured by the growth factor in a decomposition of a $3$-tensor and the stablest algorithm to be given by a nuclear decomposition. Just as tensor rank connects to algebraic geometry, tensor nuclear norm connects to functional analysis \cite{Defant, Diestel, Ryan}; thus bilinear stability could potentially open a door to this rich area of mathematics.

A very recent development in bilinear complexity is the automated discovery of fast algorithms using deep reinforcement learning. In \cite{DeepMind}, \emph{AlphaTensor} found more than 14,000 inequivalent $49$-term decompositions for $4 \times 4$ matrix product. This is impressive. But when one has that many different algorithms the question becomes which one to pick? From the perspective of numerical linear algebra, numerical stability would be the most natural secondary criteria. Since the 14,000 algorithms are all given in the form of $49$-term decompositions, their growth factors are trivial to calculate and all one needs to do is to pick the decomposition with the smallest growth factor.

\subsection*{Acknowledgment} The authors would like to thank Nick Higham and Ke Ye for helpful discussions, the two anonymous reviewers for their very pertinent suggestions, and the University of Chicago's Research Computing Center for its computing resources and services. ZD acknowledges the support of DARPA HR00112190040 and NSF ECCF 2216912. LHL acknowledges the support of DARPA HR00112190040, NSF DMS 1854831, and a Vannevar Bush Faculty Fellowship ONR  N000142312863.

\bibliographystyle{abbrv}
\bibliography{stability-ref} 

\end{document}